\theoremstyle{plain}
\newtheorem{thm}{Theorem}[section]
\newtheorem*{Cauchy}{Cauchy's Theorem}
\newtheorem*{Morse}{The Morse Stability Theorem (Over $\F$)}
\newtheorem*{Equis}{The Equi-singular Deformation Theorem}
\newtheorem*{Truncation}{The Truncation Theorem}
\newtheorem{cor}[thm]{Corollary}
\newtheorem{lem}[thm]{Lemma}
\newtheorem*{Flem}{The Fundamental Lemma}
\newtheorem{lem/defi}[thm]{Lemma/Definition}
\newtheorem{defi/lem}[thm]{Definition/Lemma}
\newtheorem{prop/defi}[thm]{Proposition/Definition}
\newtheorem*{TMT}{The Classical Morse Stability Theorem}
\newtheorem{question}[thm]{Question}
\newtheorem{notation}[thm]{Notation}
\theoremstyle{definition}
\newtheorem{defi}[thm]{Definition}
\theoremstyle{remark}
\newtheorem{rem}[thm]{Remark}
\newtheorem{attention}[thm]{\emph{\textbf{Attention}}}
\newtheorem{example}[thm]{Example}
\newtheorem{conv}[thm]{\emph{\textbf{Convention}}}
\numberwithin{equation}{section}
\newcommand{\CA}{\mathcal{A}}
\newcommand{\C}{\mathbb{C}}
\newcommand{\MC}{\mathcal{C}}
\newcommand{\CTO}{\mathcal{C}_{ord}}
\newcommand{\M}{\mathbb{M}}
\newcommand{\ii}{\sqrt{-1}}
\newcommand{\PP}{\mathcal{NP}}
\newcommand{\MP}{\mathcal{P}}
\newcommand{\R}{\mathbb{R}}
\newcommand{\D}{\mathbb{D}}
\newcommand{\F}{\mathbb{F}}
\newcommand{\K}{\mathbb{K}}
\newcommand{\Q}{\mathbb{Q}}
\newcommand{\Z}{\mathbb{Z}}
\newcommand{\ve}{\varepsilon}
\newcommand{\s}{\smallskip}
\newcommand{\m}{\medskip}
\newcommand{\mcd}{\mathcal{D}}
\newcommand{\mcv}{\mathcal{V}}
\newcommand{\CF}{\mathcal{F}}
\newcommand{\arr}{\rightarrow}
\newcommand{\mt}{\mapsto}
\newcommand{\4}{\!\!\!-\!\!\!-\!\!\!-\!\!\!-\!\!\!-\!\!\!-\!\!\!\!\!}
\newcommand{\pt}{\partial}
\let\mathscr\mathcal
\makeatletter \@input{mathrsfs.sty} \makeatother
\begin{document}

\begin{abstract}The \textit{Enriched Riemann Sphere} $\C P_*^1$
is $\C P^1$ plus a set of \textit{infinitesimals}, having the
Newton-Puiseux field $\F$ as coordinates. Complex Analysis is
extended to the $\F$-\textit{Analysis} (\textit{Newton-Puiseux
Analysis}). The classical \textit{Morse Stability 
Theorem}
is also
extended; the \textit{stability idea} is used to formulate an
\textit{equi-singular deformation theorem} in
$\C\{x,y\}(=\mathcal{O}_2)$.
\end{abstract}

\title[Enriched Riemann Sphere, Morse Stability and Equi-singularity in
$\mathcal{O}_2$] {Enriched Riemann Sphere, Morse Stability \\
and Equi-singularity in $\mathcal{O}_2$}
\author[T.-C. Kuo]{Tzee-Char Kuo}

\author[L. Paunescu]{Laurentiu Paunescu}
\address{School of Mathematics and Statistics, University of Sydney,
  Sydney, NSW, 2006, Australia}
\email{tck@maths.usyd.edu.au; laurent@maths.usyd.edu.au}

\date{\today}
\keywords{Fractional power series, Curve-germs, Infinitesimals, Newton-Puiseux
analysis,
Morse stability, Classification of singularities.}
\subjclass[2000]{Primary
14HXX, 32SXX, 58K60,
Secondary
58K40}
\maketitle

A general principle we believe in is that the study of convergent
power series in $n+1$ variables is Global Analysis of polynomials
in $n$ variables.

In this paper this is illustrated in the case $n=1$. Loosely
speaking, the classical Morse Stability Theorem, \textit{properly
reformulated} in \S\ref{ClassicalMorse}, and the stability notion
are ``transplanted" into Algebraic Curves, then applied to the
classification problem of singularities.

\s

In \S\ref{Enriched Riemann}, the Riemann sphere $\C P^1$ is
``enriched" to $\C P_*^1$ with ``infinitesimals", which are
irreducible curve-germs, and $\C$ to $\C_*$. The Newton-Puiseux
field $\F$ of convergent fractional power series is used as
coordinates, in terms of which several structures are defined.

In \S\ref{NPA}, the Cauchy Integral Theorem, Taylor expansions,
critical points, stability, $etc.$, are generalized to $\F$, as is
the classical Morse Stability Theorem.

\s

The notion of Morse stability for polynomials over $\F$ suggests a
\textit{stronger} definition for ``\textit{equi-singular
deformation}" in $\C^2$ (Compare \cite{Zariski}).

For example, in contemporary Algebraic Geometry, both deformations
\begin{equation}\label{Phamfamily}Q(x,y,t)\!:=x^4-t^2x^2y^2+y^4,\quad
P(x,y,t)\!:=x^3-y^4-3t^2xy^{2d},\;d\geq 2,
\end{equation}
are regarded as equi-singular:\;the zero sets are topologically
trivial (Milnor $\mu$-constant).

As we shall see, however, $Q$ is \textit{not} equi-singular from
our viewpoint. The hypothesis of the Equi-singular Deformation
Theorem in \S\ref{GoeThms} is not satisfied. The associated family
$\xi^4-t^2\xi^2+1$ is not Morse stable ($\xi=0$ splits into three
critical points when $t\ne 0$).

On the other hand, $P$, the Pham family (\cite{Pham}), \textit{is}
equi-singular in the \textit{sense} of
Definition\,\ref{MorseDeformation}, although the ``polar" $P_x$
splits into $x\pm ty^d$. This is explained in
Attention\,\ref{attentionPham} and Example\,\ref{PhamExample}. The
associated family $\xi^3-1$, being independent of $t$, is
obviously Morse stable in the sense of Definition
\ref{DefinitionMorseStable}. Our Equi-singular Deformation Theorem
applies.

\s

When does a given family $F(x,y,t)$, like $Q$, $P$ above, admit a
trivialization, and of what kind? An answer is given in the
\textit{Equi-singular Deformation Theorem} in
\S\ref{GoeThms},
using the notion of Morse stability. (Similar results over $\R$
were announced in \cite{kuo-pau}.)

The \textit{Truncation Theorem} at the end of
\S\ref{GoeThms} asserts that $f(x,y)$ can be equi-singularly
deformed into its ``Puiseux root truncation"
$\hat{f}_{root}(x,y)$. (We do not assume $0$ is an isolated
singularity.) This theorem is closely related to results on
sufficiency of jets, like Morse Lemma, \cite{yacoub}, \cite{bk},
\cite{BL}, \cite{laurent3}, \cite{koike}, \cite{kuo},
\cite{laurent}, \cite{laurent2}, $etc.$. Compare also the classical book
\cite{Hancock}.

\section{The Enriched Riemann Sphere}\label{Enriched 
Riemann}

 Take a holomorphic map-germ $\mathcal A : (\C,0)\rightarrow (\C
^2,0)$, $\CA(z)\ne 0$ if $z\ne 0$. The \textit{image}
\textit{set-germ}, $Im(\CA)$, or the geometric locus of $\CA$, has
a well-defined tangent line $T(\CA)$ at $0$. We call $Im(\CA)$ an
\textit{infinitesimal} \textit{at} $T(\CA)\in \C P^1$. The set of
infinitesimals is denoted by $\C P_*^1$.

The geometric locus of $z\mt (az,bz)$ is identified with $[a:b]\in
\C P^1$, hence $\C P^1\subset \C P_*^1$.

For instance, the curve-germ $x^2-y^3=0$, being the geometric
locus of $z\mt(z^3,z^2)$, is an infinitesimal at $[0:1]$. It is
``closer" to $[0:1]$ than any $[a:1]$ is, $a\ne 0$, in the sense
that its \textit{contact} \textit{order} (defined below) with
$x=0$ is higher than that between $x=ay$ and $x=0$.

As in Projective Geometry, $\C P_*^1$ is the union $\C
P_*^1=\C_*\cup \C _*'$, where
$$\C_*\!:=\{Im(\CA)\mid T(\CA)\ne [1:0]\},
 \quad \C_*'\!:=
 \{Im(\CA)\,|\,T(\CA)\ne [0:1]\}.$$

The classical Newton-Puiseux Theorem asserts that the field $\F$
of convergent fractional power series in an indeterminate $y$ is
algebraically closed. (\cite{Brieskorn}, \cite{dJ},
\cite{waerden}, \cite{walker}, \cite{whitney}.)

Recall that a non-zero element of $\F$ is a (finite or infinite)
convergent series
\begin{equation}\label{fractionalseries}
\alpha:\;\alpha(y)=a_0y^{n_0/N}+\cdots +a_iy^{n_i/N}+\cdots,\quad
n_0<n_1<\cdots,
\end{equation}where $N\in \Z^+$, $n_i\in \Z$, $0\ne a_i\in \C$. The
\textit{order}
of $\alpha$ is $O_y(\alpha)\!:=n_0/N$, $O_y(0)\!:=+\infty$.

We can assume $GCD(N, n_0, n_1, ...)=1$. In this paper we call
$m_{puis}(\alpha)\!:=N$ the \textit{Puiseux multiplicity} of
$\alpha$ (for clarity). The \textit{conjugates} of $\alpha$ are
$$\alpha_{conj}^{(k)}(y)\!:=\sum a_i \theta^{kn_i}y^{n_i/N},\quad
0\leq k\leq N-1,\quad \theta\!:=e^{2\pi \ii/N}.$$

The following $\D$ is an integral domain with quotient field $\F$
and ideals $\M$, $\M_1$:
$$\D\!:=\{\alpha\in \F\,|\,O_y(\alpha)\geq 0\},
\;\,\M\!:=\{\alpha\,|\,
O_y(\alpha)>0\},\;\,\M_1\!:=\{\alpha\,|\,O_y(\alpha)\geq 1\}.$$

Define $|\alpha|\!:=\sum 2^{-n_i/N}|a_i|(1+|a_i|)^{-1}$. Then
 $d(\alpha,\beta)\!:=|\alpha-\beta|$ is a
 \textit{metric} on $\D$. If we fix $N$, then\underline{}
$\lim_{m\arr \infty}\sum a_i(m)y^{n_i/N}=0$
\textit{iff} each $a_i(m)\arr 0$ (point-wise convergence).

\s

Given $\alpha\in \M_1$, let $\CA(z)\!:=(\alpha(z^N),z^N)$. We then
define $\alpha_*\!:=\pi_*(\alpha)\!:=Im(\CA)$, and use
$\pi_*:\M_1\arr \C _*$, a many-to-one surjective mapping, as a
\textit{coordinate system} on $\C_*$.

A coordinate system on $\C_*'$ is $\pi_*':\M_1\arr \C _*'$,
$\alpha_*\!:=\pi_*'(\alpha)\!:=Im(\CA)$, $\CA
(z)\!:=(z^N,\alpha(z^N))$.

\s

We furnish $\C_*$ (resp.\,$\C_*'$) with the quotient topology of
$\pi_*$ (resp.\,$\pi_*'$). As for the transition function in the
overlap $\C _*\cap \C _*'$, take $x=\alpha(y)$, $n_0/N=1$, we then
``solve $y$ in terms of $x$", obtaining
$y=\beta(x)\!:=b_0x+b_1x^{n_1'/{N'}}+\cdots$, where $a_0b_0=1$,
each $b_i$ is a polynomial in finitely many of\,
$(\!\sqrt[N]{a_0})^{-1}$, $a_1/{a_0}$, $a_2/{a_0}$, .... Hence the
topologies coincide in $\C_*\cap \C_*'$.

\textit{The quotient topology on} $\C P_*^1$ \textit{is
well-defined}.

\s

Let $X$, $Y\subset \R^n$ be germs of sub-analytic sets at $0$,
$X\cap Y=\{0\}$, $X\ne \{0\}\ne Y$. Define the \textit{contact
order} $\mathcal{C}_{ord}(X,Y)$ to be the smallest number $L$ (the
Lojasiewicz exponent) such that $d(x,y)\geq a\|(x,y)\|^L$, where
$x\in X$, $y\in Y$, $\| x \|=\|y\|$, $a>0$ constant (\cite{BM2}).

In particular, for $\alpha_*$, $\beta_*\in \C P_*^1$,
$\mathcal{C}_{ord}(\alpha_*,\beta_*)$ is defined;
$\mathcal{C}_{ord}(\alpha_*,\alpha_*)\!:=\infty$. Thus, in $\C_*$,
$$\mathcal{C}_{ord}(\alpha_*,\beta_*)={\max}_j\{O_y(\alpha-
\beta_{conj}^{(j)})\}={\max}_{k,j}\{O_y(\alpha_{conj}^{(k)}-
\beta^{(j)}_{conj})\}.$$

\textit{This is the contact order structure}
\textit{on} $\C
P_*^1$.

\s

The \textit{Puiseux (characteristic) pairs} of $\alpha$
(\cite{algsurface}), which describes the iterated torus knot of
the curve-germ $\alpha_*$, is denoted by $\chi_{puis}(\alpha)$ or
$\chi_{puis}(\alpha_*)$.

\s

The \textit{Enriched Riemann Sphere} is $\C P_*^1$ furnished 
with
the above structures, $\C_*$ is the \textit{enriched complex
plane}. (The Riemann-Zariski surface (\cite{Jonsson}, p.272) is
much larger than $\C P_*^1$. For example, $x=y^{\sqrt{2}}$ defines
a point in the former, but not in the latter.)

\begin{conv}\label{conv1}Throughout this paper, $\epsilon>0$ is a
sufficiently small constant, $$\label{Interval}I_\R\!:=\{t\in
\R\,|\,|t|<\epsilon\} ,\,\; I_\C\!:=\{t\in \C\,|\,|t|<\epsilon\},
\,\; I_\F\!:=\{t\in \D\,|\,|t|<\epsilon\}.$$\end{conv}

We say $\varphi(w)$ is \textit{real analytic}, $w=u+\ii\,v\in \C$,
if it is so as a function of $(u,v)\in \R^2$.

By ``$+\cdots$" we mean ``\textit{plus higher order terms}".

\section{The $\F$-Analysis (Newton-Puiseux 
Analysis)}
\label{NPA}
Given $U\subset \D$, open, and $\phi:U\arr \F$. We say $\phi$ is
\textit{Puiseux-Lojasiewicz bounded} if every $\alpha\in U$
has a
neighbourhood $\mathcal{N}(\alpha)$ with constants $K(\alpha),
L(\alpha)>0$, such that$$m_{puis}(\phi(\xi)) \leq
K(\alpha)m_{puis}(\xi), \quad O_y(\phi(\xi))\geq
-L(\alpha),\quad\;\xi\in \mathcal{N}(\alpha).$$

\textit{In this paper we only study functions which
are
Puiseux-Lojasiewicz bounded.}

\begin{defi}We say $\phi$ is \textit{differentiable} at 
$\alpha\in U$,
with
\textit{derivative} $\phi'(\alpha)\in \F$, if
$$\phi^{\,\prime}(\alpha)=
\lim[\phi(\alpha+\delta)-\phi(\alpha)]/\delta\;\,\text{as}\;\,\delta\arr0
\;\,(\delta\in \D).$$

If $\phi'(\gamma)=0$, $\gamma$ is a \textit{critical}
\textit{point}, with \textit{multiplicity}
$$m_{crit}(\gamma)\!:=\max\{k|\phi^{(i)}(\gamma)=0,\,1\leq i\leq
k\}.$$
\end{defi}

\begin{Cauchy}\label{AnalyticityTheorem}
If $\phi^{\,\prime}(\alpha)$ exists at every $\alpha\in U$, then
all derivatives $\phi^{(k)}(\alpha)$ exist,
\begin{equation}\label{Cauchy}
\oint_{z\in C}\phi(\mu)d\mu=0,\quad \phi^{(k)}(\alpha)=
\frac{k!}{2\pi \ii}\oint_{z\in C}
\frac{\phi(\mu)}{(\mu-\alpha)^{k+1}}\,d\mu,\; k\geq
0,\end{equation}where $\mu\!:=\alpha+z\delta$, $\delta\in \D$,
$d\mu\!:=\delta \,dz$, $C$ a sufficiently small contour around\,
$0\in \C$.

Moreover, $\phi$ is \textit{$\F$-analytic} in the sense that if
$\alpha+z\delta\in U$, $|z|<r$, $z\in \C$, then
\begin{equation}\label{Taylor}\phi(\alpha+z\delta)=\phi(\alpha)+\cdots
+(1/k!)\phi^{(k)}(\alpha)(z\delta)^k+\cdots,\;\,|z|<r,
\end{equation}where $m_{puis}(\phi^{(k)}(\alpha))\leq K(\alpha)$, a constant.
\end{Cauchy}
\textit{From now on, we consider a given}
$\phi:\M_1\arr \M_1$,
which \textit{extends} to a differentiable function $U\arr \D$.
Taking $\alpha, \delta\in \M_1$, $\xi\!:=z\delta$, we have the
``Taylor expansion" of $\phi$ at $\alpha$:
\begin{equation}\label{TaylorXi}\phi(\alpha+\xi)=\sum \alpha_k\xi^k,\quad \xi
\in \M_1,\quad \alpha_k\!:=(1/k!)\phi^{(k)}(\alpha)\in
\D.\end{equation}

\textit{In this paper we always assume} $\phi$ is
\textit{mini-regular in} $\xi$, say of \textit{order} $m$, 
$i.e.$,
\begin{equation}\label{regular}O_y(\alpha_m)=0,\quad O_y(\alpha_k)+k\geq m\;\,
\text{for}\;\, 0\leq k\leq m-1.
\end{equation}
Thus, by the Newton-Puiseux Theorem, $\phi$ has $m$ roots in
$\M_1$,
\begin{equation}\label{zerooff}Z(\phi)\!:=\{\zeta\in
\M_1\,|\,\phi(\zeta)=0\}\!:=\{\zeta_1,..., \zeta_d\},\quad m=\sum
m_i,\end{equation}where $m_i\!:=m(\zeta_i)$ is the multiplicity of
$\zeta_i$. Of course $\phi$ has $m-1$ critical points in $\M_1$.

\begin{defi} In $\M_1$,
define $\mu\sim_\phi \nu$ \textit{iff} either $\mu=\nu$ or else
$$O_y(\mu-\nu)>O_y(\mu-\zeta_i)=O_y(\nu-\zeta_i),\quad 1\leq i\leq d.$$

The equivalence class of $\mu$ is denoted by $\mu_\phi$. The
\textit{height} of $\mu_\phi$ is
$$h(\mu_\phi)\!:=\max\{O_y(\mu-\zeta_i)\,|\,1\leq i\leq d\}.$$

The quotient space is $\M_{1,\phi}\!:=\M_1/{\sim_\phi}$, with the
\textit{contact order structure}:
$$\mathcal{C}_{ord}(\mu_\phi,\nu_\phi)\!:=O_y(\mu-\nu)\;\text{if}\;
\mu_\phi\ne \nu_\phi;\;
\mathcal{C}_{ord}(\mu_\phi,\mu_\phi)\!:=\infty.$$

Let $\mu_\phi(y)$ denote $\mu(y)$ with terms $y^e$ deleted,
$e>h(\mu_\phi)$; $\mu_\phi(y)$ depends only on $\mu_\phi\in
\M_{1,\phi}$. We call $\mu_\phi(y)\in \M_1$ the \textit{canonical
coordinate} of $\mu_\phi\in \M_{1,\phi}$; $\mu_\phi$ and
$\mu_\phi(y)$ are often \textit{identified}.

The \textit{Puiseux pairs} of $\mu_\phi\in \M_{1,\phi}$ is
$\chi_{puis}(\mu_\phi)\!:=\chi_{puis}(\mu_\phi(y))$.\end{defi}

\begin{example}. Take $\phi(\xi)\!:=\xi^2-2y^3$,
$\mu(y)\!:=y^{3/2}+y^{7/4}$. Then $h(\mu_\phi)=3/2$,
$\mu_\phi(y)=y^{3/2}$, $\chi_{puis}(\mu)=\{3/2, 7/4\}$,
$\chi_{puis}(\mu_\phi)=\{3/2\}$. (We also call $\mu_\phi(y)$ the
$\phi$-\textit{truncation} of $\mu(y)$.)\end{example}

The \textit{tree-model} of $\phi$ defined in \cite{kuo-lu} is our
$\M_{1,\phi}$ \textit{without} the structures. See \S
\ref{treemodel}.

\begin{defi}In $\M_{1,\phi}$ define $\xi_\phi\sim_{bar}
\eta_\phi$ \textit{iff} either $\xi_{\phi} =\eta_{\phi}$, or else
$$ h(\xi_\phi)=h(\eta_\phi)=\MC_{ord}(\xi_\phi,\eta_\phi).$$

An equivalence class is called a \textit{bar} (as in
\cite{kuo-lu}). The \textit{bar space} is the quotient
$$Bsp(\M_{1,\phi})\!:=\M_{1,\phi}/{\sim_{bar}}.$$The bar containing $\xi_\phi$
is denoted by
$B(\xi_\phi)$, having \textit{height}
$h(B(\xi_\phi))\!:=h(\xi_\phi)$; $B_h(\phi)$ denotes a bar of
height $h$. The \textit{Lojasiewicz exponent} of $\phi$
\textit{at} $\xi$, or \textit{at} $\xi_\phi$, or \textit{on}
$B(\xi_\phi)$, is
\begin{equation}\label{loj}
L_\phi(\xi)\!:=L(\xi_\phi)\!:=L(B(\xi_\phi))\!:=O_y(\phi(\xi)).\end{equation}
\end{defi}

That (\ref{loj}) is well-defined is an easy consequence of the
following:
$$\phi(\xi)=unit\cdot \prod(\xi-\zeta_i)^{m_i}, \quad O_y(\phi(\xi))=
\sum m_i\,
O_y(\xi-\zeta_i).$$

\s

An \textit{important special case} is $\mcv\!:=\M_{1,id}$ when
$\phi=id:\xi\mt \xi$. Here $Z(id)=\{0\}$.

We call $\mcv$ the \textit{value} \textit{space}, and
$0_\mcv\!:=0_{id}$ the ``zero" element.

If $\mu(y)=uy^h+\cdots$, $u\ne 0$, then $\mu_{id}$ is
\textit{completely determined} by the pair $(u,h)$. Hence, if
$h<\infty$, there is a \textit{unique} bar of height $h$,
$B_h(id)=\{(u,h)\,|\,u\ne 0\}$; this is a copy of $\C-\{0\}$.

For $h=\infty$, we have a singleton $B_\infty(id)=\{0_\mcv\}$.

\begin{defi}
The \textit{valuation} \textit{function}, $val$, also written as
$val_\phi$ for \textit{clarity}, is$$val\!:=val_\phi:\M_{1,\phi}
\arr \mcv,\quad val(\xi_\phi)\!:=val_\phi(\xi_\phi)\!:=
\phi(\xi)_{id}.$$

If $\phi'(\gamma)=0$, $\gamma\in \gamma_\phi$, $\gamma_\phi$ is a
\textit{critical point} of $val_\phi$. The
\textit{multiplicity},
$m_{crit}(\gamma_\phi)$, is the total number of such $\gamma\in
\gamma_\phi$.

The subspace $\textit{C}(val_{\phi})$ of critical points is
displayed as
\begin{equation}\label{listingC}
\textit{C}(val_{\phi})\!:=\{\gamma_{1,\phi},...,\gamma_{p,\phi}\},\quad \sum
m_{crit}(\gamma_{j,\phi})=m-1.\end{equation}
\end{defi}

\textit{Example}. Take $\phi(\xi)=\xi^4(\xi-y)^5$, $\varepsilon\ne
0$, $h>1$. Then $val_\phi(\varepsilon
y^h+\cdots)=(-\varepsilon^4,4h+5)$, $
val_\phi((1+\varepsilon)y+\cdots)=(\varepsilon^5(1+\varepsilon)^4,9)$,
$val_\phi(0)=val_\phi(y)=0_\mcv$.

\s

\begin{defi}Let $\phi$ be as in (\ref{TaylorXi}),
(\ref{regular}). Take
$$\Phi(\xi,t)\!:=\sum A_i(t)\xi^i\in \F\{\xi,t\},\quad \Phi(\xi,0)=
\phi(\xi).$$

We call $\Phi(\xi,t)\!:=\phi_t(\xi)$ an $\F$-\textit{analytic
deformation} of $\phi$.\end{defi}

\textit{In this paper, we always assume} 
$A_i(t)\in \D\{t\}$,
\textit{and $\Phi$ is mini-regular:}
$$O_y(A_i(t))+i\geq m, \quad 0\leq i\leq m.$$

\s

In $\M_1\times I_\F$, define $(\mu,t)\sim_\Phi(\nu,t')
\;\,\text{\textit{iff}}\;\, t=t',\; \mu_{\phi_t}=\nu_{\phi_t}$.

The quotient space is $\M_1\times_\Phi I_\F\!:=\M_1\times
I_\F/{\sim_\Phi}$, with valuation function
$$val_\Phi: \M_1\times_\Phi I_\F\rightarrow \mcv,\quad (\xi_{\phi_t},t)\mt
[\phi_t(\xi)]_{id}.$$

The subspace of critical points is
$$\textit{C}(val_{\Phi})\!:=
\{(\gamma_{\phi_t},t)\in \M_1\times_\Phi I_\F|\,\gamma_{\phi_t}\in
\textit{C}(val_{\phi_t})\}.$$

The \textit{bar space} $Bsp(\M_1\times _\Phi I_\F)$ is similarly
defined.

\begin{defi}\label{MorseDeformation}We say $\Phi$ is \textit{almost
Morse stable}
if there exists a homeomorphism
\begin{equation}\label{MorseStableDefi}
\tau:\textit{C}(val_{\phi})\times I_\F\arr \textit{C}(val_{\Phi}),
\quad (\gamma_{\phi}, t)\mt
(\tau_t(\gamma_{\phi}),t),\end{equation}where
$L(\tau_t(\gamma_\phi))=L(\gamma_\phi)$.

We say $\Phi$ is \textit{Morse stable} if, in addition, the
following holds.

Given $\gamma_\phi$, $\gamma'_\phi\in \textit{C}(val_{\phi})$. If
$B({\gamma}_\phi)=B({\gamma'}_\phi)$ and $val_{\phi}(\gamma_\phi)=
val_{\phi}(\gamma'_\phi)$, then
\begin{equation}\label{MorseCriticalValue}val_{\phi_t}(\tau_t(\gamma_{\phi}))
=val_{\phi_t}(\tau_t(\gamma'_{\phi})),\quad
B({\tau_t(\gamma}_\phi))=B(\tau_t({\gamma'}_\phi)).\end{equation}
\end{defi}

\begin{example}\label{PhamExample}For $P(x,y,t)$ in
(\ref{Phamfamily}), $\Phi(\xi,t)\!:=\xi^3-3t^2y^{2d}\xi-y^4$,
$val_{\phi_t}$ has a \textit{unique} critical point
$\gamma_{\phi_t}=0$ in $\M_{1,\phi_t}$,
$m_{crit}(\gamma_{\phi_t})=2$. With $\tau_t=id$, $\Phi$ \textit{is
Morse stable}.

\textit{Attention: $\phi_t$ has two critical points
$\xi=\pm ty^d$
in $\M_1$, but $val_{\phi_t}$ has only one in
$\M_{1,\phi_t}$.}

On the other hand, $\Psi(\xi,t)\!:=\xi^3+ty^3-y^4$ is \textit{not
almost Morse stable}.
\end{example}
Let $\mcv\times_{Loj} Bsp(\M_{1,\phi})\!:=\{(v,B)\,|\,v\in
\mcv,\,L(v)=L(B)\}\subset\mcv\times Bsp(\M_1,\phi)$. We may call
this a ``Lojasiewicz fiber product". Define $\mcv\times_{Loj}
Bsp(\M_1\times_\Phi I_\F)$ similarly.

\begin{Morse}Suppose $\Phi$ is Morse stable. Then there exist
$t$-level preserving homeomorphisms (deformations) $\mcd_\Phi$,
$\mcd_\mcv$ such that the diagram

\[
\begin{array}{cccc}

\M_{1,\phi}\times I_\F & \stackrel{Val_{\phi}\;\;}
{\4\longrightarrow} & \mcv\times_{Loj} Bsp(\M_{1,\phi})\times I_\F

\\

\Big\downarrow\vcenter{\rlap{$\mcd_\Phi$}} & & \Big\downarrow\vcenter{%
\rlap{$\mcd_\mcv$}}

\\

{\M_1\times_\Phi I_\F} &
\stackrel{Val_{\Phi}\;\;}{\4\longrightarrow}&
 \mcv\times_{Loj} Bsp(\M_1\times_\Phi I_\F)

\end{array}
\]
is commutative, where
$$Val_{\phi}(\mu_\phi,t)\!:=(val_{\phi}(\mu_\phi),B({\mu}_\phi),t),\;
Val_{\Phi}
(\mu_{\phi_t},t)\!:=(val_{\phi_t}(\mu_{\phi_t}),B({\mu}_{\phi_t}),t).$$

The structures are preserved. That is, if
$\mcd_\Phi(\mu_\phi,t)\!:=(\mu_{\phi,t},t)$, $\mu_{\phi,t}\in
\M_{1,\phi_t}$, then
\begin{equation}\label{structurepreserved}h(\mu_{\phi,t})=h(\mu_\phi),
\;\chi_{puis}(\mu_{\phi,t})=\chi_{puis}(\mu_\phi),\;
\CTO(\mu_{\phi,t},\nu_{\phi,t})=
\CTO(\mu_\phi,\nu_\phi);\end{equation}and if we write
$\mcd_\mcv(v,B,t)\!:=(v_t,B_t,t)$, then $L(v_t)=L(v)$,
$h(B_t)=h(B)$.
\end{Morse}

\section{The Equi-singular Deformation Theorem}
\label{GoeThms}

A \textit{real analytic} map-germ $\rho: ([0,\infty),0)\rightarrow
(\C^2,0)$ is called an \textit{analytic arc} (\cite{milnor}); we
call the \textit{image set-germ}, $Im(\rho)$, a 
\textit{geo-arc}.
The \textit{complexification} of $\rho$ is
$\rho_{_\C}(z)\!:=\rho(z)$, $z\in \C$.

Given $k\in \Z^+$, define $\rho^{(k)}(s)\!:=\rho(s^k)$. Of course,
$Im(\rho^{(k)})=Im(\rho)$, the same geo-arc.

\s

Let $f(x,y)\in \C\{x,y\}$ be given, \textit{mini-regular} in $x$
of order $m$, $i.e.$,
$$f(x,y)=H_m(x,y)+H_{m+1}(x,y)+\cdots,\;\,H_m(1,0)\ne 0.$$Let
$\phi(\xi)\!:=f(\xi,y)$. In $\C_*$, define
$\alpha_*\sim_{f}\beta_*$ \textit{iff} either $\alpha_*=\beta_*$,
or else $$
\CTO(\alpha_*,\beta_*)>\CTO(\alpha_*,\zeta_*)=\CTO(\beta_*,\zeta_*)\;\,
\forall \,\zeta\in Z(\phi).$$

The equivalence class of $\alpha_*$ is denoted by $\alpha_{*/f}$.
(If $\zeta\in Z(\phi)$, $\zeta_{*/f}\!:=\{\zeta_*\}$.)

Call $\alpha_\phi(y)$, and any one of the conjugates
$\alpha_{\phi,conj}^{(k)}(y)$, a \textit{canonical coordinate} of
$\alpha_{*/f}$.

We say $\alpha_{*/f}$, $\beta_{*/f}$ are \textit{bar equivalent}:
$\alpha_{*/f}\sim_{bar} \beta_{*/f}$, if they have canonical
coordinates $\alpha_\phi$, $\beta_\phi$ respectively, such that
$\alpha_\phi\sim _{bar}\beta_\phi$.

\begin{example} Take $f(x,y)\!:=x^2-2y^3$. The curve-germs
$x^2-y^3=0$ and $(x^2-y^3)^2-xy^5=0$ are $\sim_f$ equivalent,
hence define a same point in $\C_{*/f}$, with canonical
coordinates $\pm y^{3/2}$.\end{example}

\begin{defi}The \textit{valuation} \textit{function} on the
quotient space
$\C_{*/f}\!:=\C_*/{\sim_f}$ is
$$val_{*/f}:\C_{*/f}\rightarrow \mcv_{\Z},\quad \alpha_{*/f}
\mt [\prod_{k=1}^N\phi
(\alpha_{\phi,conj}^{(k)}(y))]_{id},\quad
N\!:=m_{puis}(\alpha_\phi),$$where $\mcv_{\Z}\!:=\{0_{\mcv}\}\cup
\{(u,h)|u\ne 0,\,h\in \Z^+\}$, a subspace of $\mcv$.

If $\phi'(\gamma)=0, \gamma\in \M_1$, $\gamma_{*/f}$ is called a
\textit{critical point} of $val_{*/f}$, the
\textit{multiplicity},
$m_{crit}(\gamma_{*/f})$, is the total number of $\mu\in \M_1$,
counting multiplicities, such that $\phi'(\mu)=0$,
$\mu_{*/f}=\gamma_{*/f}$.

The subspace of critical points of $val_{*/f}$ is denoted by
$\textit{C}(val_{*/f})$.

In $\C_{*/f}$, define \textit{height} by
$h(\alpha_{*/f})\!:=h(\alpha_\phi)$, and \textit{contact order} by
$$\CTO(\alpha_{*/f},\beta_{*/f})\!:
=\CTO(\alpha_*,\beta_*)\;\textit{if}\;\,\alpha_{*/f}\ne\beta_{*/f};\;\,
\CTO(\alpha_{*/f},\alpha_{*/f})\!:=\infty.$$ The \textit{Puiseux
pairs} are
$\chi_{puis}(\alpha_{*/f})\!:=\chi_{puis}(\alpha_\phi)$.
\end{defi}

Let $F(x,y,t)\!:=F_t(x,y)\!:=\sum_{i+j\geq m}c_{ij}(t)x^iy^j\in
\C\{x,y,t\}$ be a given deformation of $f(x,y)$, $i.e.$,
$F_0(x,y)=f(x,y), F_t(0,0)\equiv 0$. In $\C_*\times I_\C$ define
$(\alpha_*,t)\sim_F(\beta_*,t')$ \textit{iff} $t=t'$ and
$\alpha_{*/{F_t}}=\beta_{*/{F_t}}$. The quotient space is
$\C_*\times_{F}I_\C\!:=\C_*\times I_\C/{\sim_F}$.
\begin{Equis}\label{homeoH}Suppose the deformation $\Phi(\xi,t)\!:=F(\xi,y,t)$
is almost
Morse stable. Then there exists a $t$-level preserving
homeomorphism
\begin{equation}\label{homeoh}H:(\C^2\times I_\C, 0\times I_\C)\arr
(\C^2\times I_\C,0\times I_\C), \quad ((x,y),t)\mt
(H_t(x,y),t),\end{equation}which is real bi-analytic outside
$\{0\}\times I_\C$. The following hold.
\begin{enumerate}

\item $F(H_t(x,y),t)=f(x,y)$, $t\in I_\C$, $i.e.$, $F(x,y,t)$ is
``trivialized" by $H$.

\s

\item There exists $c>0$, $c\leq \|H_t(x,y)\|/\|(x,y)\|\leq 1/c$,
$t\in I_\C$.

\s

\item If $\rho(s)$ is an analytic arc, then $H_t(\rho^{(k)}(s))$
is real analytic in $(s,t)$ (called a ``geo-arc wing") for some
$k\in \Z^+$. In particular, $H_t$ is geo-arc analytic in the sense
that it carries geo-arcs to geo-arcs.

\s

\item Take $\alpha_{*/f}\in \C_{*/f}$, and any $\rho$ such that
$Im(\rho_{_\C})\in \alpha_{*/f}$. The geo-arc $Im(H_t\circ \rho)$
is contained in a unique curve-germ, say $\delta_*\in \C_*$. Then
$\eta_t(\alpha_{*/f})\!:=\delta_{*/F_t}$ is independent of the
choice of $\rho$. Hence $\eta_t:\C_{*/f}\rightarrow \C_{*/F_t}$ is
well-defined,
\begin{equation}\label{etastar}\eta_*: \C_{*/f}\times
I_\C\rightarrow \C_*\times _F I_\C,\quad (\alpha_{*/f},t)\mt
(\eta_t(\alpha_{*/f}),t),\end{equation}is a homeomorphism,
preserving height, contact order, and Puiseux pairs.

\s

\item If $\gamma_{*/f}\in \textit{C}(val_{*/f})$ then
$\eta_t(\gamma_{*/f})\in \textit{C}(val_{*/{F_t}})$,
$m_{crit}(\gamma_{*/f})=m_{crit}(\eta_t(\gamma_{*/f})).$\end{enumerate}

\m

Assume $\Phi(\xi,t)$ is Morse stable. Take $\gamma_{*/f}$,
$\gamma_{*/f}'\in \textit{C}(val_{*/f})$,
$\gamma_{*/f}\sim_{bar}\gamma'_{*/f}$. Then
\begin{equation}\label{valuationpreserve}
val_{*/f}(\gamma_{*/f})=val_{*/f}(\gamma_{*/f}')
\;\,\text{implies}\;\,
val_{*/F_t}(\eta_t(\gamma_{*/f}))=val_{*/F_t}(\eta_t(\gamma_{*/f}')),
\end{equation}where
$\eta_t(\gamma_{*/f})\sim_{bar}\eta_t(\gamma_{*/f}')$.
\end{Equis}

Now let us consider $f(x,y)$, $Z(\phi)\!:=\{\zeta_1,...,\zeta_d\}$
as in (\ref{zerooff}), $m_i\!:=m(\zeta_i)$,
$$f(x,y)=u(x,y)\cdot\prod_{i=1}^d(x-\zeta_i(y))^{m_i},\quad u(0,0)\ne 0.$$
Let $e_i\!:=\max_{j\ne i}\{O_y(\zeta_i-\zeta_j)\}$. Let
$\hat{\zeta}_i(y)$ denote $\zeta_i(y)$ with all terms $y^e$
deleted, $e>e_i$.

\begin{defi}The \textit{Puiseux root truncation} of $f(x,y)$ 
is,
by definition,
$$
\hat{f}_{root}(x,y)\!:=
\begin{cases} \prod_{i=1}^d(x-\hat{\zeta}_i(y))^{m_i}& \text{if}\;\,
d>1,\\
(x-\zeta_1)^{m_1} & \text{if}\;\, d=1.
\end{cases}
$$

Let $R_i(y)\!:=\zeta_i(y)-\hat{\zeta}_i(y)$ (the remainder). Take
$u(x,y,t)$ (a deformation of unit),
$$u(0,0,t)\ne 0,\quad u(x,y,0)=u(x,y),\quad u(x,y,1)=1.$$
The \textit {Puiseux root deformation} of $f(x,y)$ is, by
definition,
$$F_{root}(x,y,t)\!:=u(x,y,t)\cdot {\prod}_{i=1}^d[x-\zeta_i(y)+tR_i(y)]^{m_i}
\in \C\{x,y,t\}.$$
\end{defi}

Note that $\hat{f}_{root}(x,y)\in \C\{x,y\}$, since it is
invariant under the conjugations; if $d>1$ then
$\hat{f}_{root}(x,y)$ is a polynomial. The following theorem is
proved at the end of \S \ref{proofEquiSingThm}.
\begin{Truncation}\label{essential}The Puiseux root deformation
$F_{root}(x,y,t)$ is Morse
stable. In particular,
 $f(x,y)$ and $\hat{f}_{root}(x,y)$ are geo-arc analytically
equivalent.
\end{Truncation}

\begin{example}For a weighted expansion
$f(x,y)\!:=W_d(x,y)+\cdots$, if $W_d$ is non-degenerate then
$\hat{f}_{root}(x,y)=W_d(x,y)$, $f(x,y)$ is geo-arc analytically
equivalent to its initial form.\end{example}

Next, $g(x,y)\!:=(x^2-y^4)^2-y^{10}+\cdots$ has Puiseux roots
$x=\pm y^2 \pm \frac{1}{2}y^3+\cdots$,
$$\hat{g}_{root}(x,y)=(x^2-y^4)^2-\frac{1}{4}y^6[(x-y^2)^2+(x+y^2)^2]+
\frac{1}{16}y^{12}.$$
Note that $\hat{g}_{root}(x,y)$ is \textit{not} obtained by
deleting certain terms of $g(x,y)$.

\m

\begin{rem}We call $\alpha_{*/f}$ an ``$f$-\textit{blurred}"
\textit{infinitesimal}: Points of $\C_*$ equivalent under $\sim_f$
are \textit{no longer distinguishable}\,--
``\textit{f}-blurred".
The notion of blurring plays a vital role in this paper. For
example, the Pham deformations
$$P_{even}(x,y,t)=x^3-y^4+3txy^{2d},\quad
P_{odd}(x,y,t)=x^3-y^4+3txy^{2d+1},$$where $d\geq 2$, are regarded
by some experts as substantially different, since the polars are
very different when $t\ne 0$. To us, however, there is only one
critical point -- a ``\textit{blurred} \textit{polar}" -- in
either case, of multiplicity $2$. (Compare
Attention\,\ref{attentionPham}). The above theorem
applies.\end{rem}

\begin{rem}The above (\ref{valuationpreserve}) says that the
family $\{val_{*/F_t}\}$ on the spaces $\{\C_{*/F_t}\}$ is Morse
stable, the deformation of the critical points being given by
$\eta_t$. Hence, as in the classical case, we can construct a
trivialization of the family, like $(D_t,d_t)$, in
\S\,\ref{ClassicalMorse}.

However, we are not saying that this trivialization coincides with
$\eta_*$. We believe it would be too good (too strong) for this to
be true.
\end{rem}

\section{Proof Of Cauchy's Theorem}\label{proofof cauchy}

Let $\delta\in \D$ be given, and fixed. As $\phi$ is
Puiseux-Lojasiewicz bounded, we can write
$$y^L\phi(\alpha+z\delta)=\sum_{i=0}^\infty
c_{\delta,i}(z)y^{n_i/K},\quad 0<K\leq n_0<\cdots,$$ where $L$,
$K$ are constants, $|z|$ sufficiently small.

Take an increment $\Delta z$ and compute the derivative. We find
\begin{equation}\label{deltaderivative}y^L\phi'(\alpha+z\delta)=
\frac{1}{\delta}\sum
c_{\delta,i}'(z)y^{n_i/K}.\end{equation}In particular,
$c_{\delta,i}'(z)$ exists, $c_{\delta,i}(z)$ is holomorphic,
$$c_{\delta,i}(0)=\frac{1}{2\pi \ii}\oint_{z\in C}
\frac{c_{\delta,i}(z)}{z}\,dz,
\quad \phi(\alpha)=\frac{1}{2\pi \ii}\oint_{z\in
C}\frac{\phi(\mu)}{\mu-\alpha}\,d\mu,$$where
$\mu\!:=\alpha+z\delta$. Then, as in Complex Analysis,
(\ref{Cauchy}) follows.

Next we show (\ref{Taylor}). Take $\delta=1$ in
(\ref{deltaderivative}), and then set $z=0$. We have
$$y^L\phi'(\alpha+z)=\sum c_{1,i}'(z)y^{n_i/K}, \quad c_{1,i}'(0)=
\frac{1}{\delta}c_{\delta,i}'(0).$$

Applying the same argument to higher derivatives, we have
$$c_{1,i}^{(k)}(0)=\delta^{-k}c_{\delta,i}^{(k)}(0),\quad k\geq 1.$$Hence,
$$y^L\phi(\alpha+z\delta)=\sum_i\sum_k
\frac{1}{k!}c_{\delta,i}^{(k)}(0)z^ky^{n_i/K}
=\sum_k \frac{1}{k!}\phi^{(k)} (\alpha)\cdot (z\delta)^k.$$

Take $\delta=y^h$, $h\in \Q^+$. If $m_{puis}(\phi^{(k)}(\alpha))$
were unbounded, then $m_{puis}(\phi(\alpha+cy^h))=\infty$ for
generic $c$, a contradiction. This completes the proof.
\begin{cor}Suppose $\phi:U\arr \D$ is $\F$-analytic, $0\in U$, $\phi(0)=0$.
Then there exist a holomorphic germ $f(x,y)$ and $N\in \Z^+$
 such that $\phi(\xi)=f(\xi,y^{1/N})$, $\xi\in \M$.
\end{cor}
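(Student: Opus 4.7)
The plan is to construct $f$ from the Taylor expansion of $\phi$ at $0$ and verify convergence via Cauchy's integral formula combined with a Hartogs-type argument.

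By Cauchy's Theorem applied at $\alpha = 0$, we have $\phi(\xi) = \sum_{k \ge 0} \alpha_k \xi^k$ with $\alpha_k := \phi^{(k)}(0)/k! \in \D$ and, crucially, $m_{puis}(\alpha_k) \le K$ uniformly in $k$ (where $K = K(0)$ is furnished by Puiseux--Lojasiewicz boundedness). Take $N := K!$; then every $\alpha_k$ lies in $\C\{s\}$ with $s := y^{1/N}$. Writing $\alpha_k(y) = a_k(s) = \sum_j a_{k,j}\, s^j$, set
$$f(x, y) := \sum_{k, j \ge 0} a_{k, j}\, x^k y^j \in \C[[x, y]].$$
Once $f$ is shown to converge on a bi-disc, the required identity $\phi(\xi) = f(\xi, y^{1/N}) = \sum_k a_k(y^{1/N})\,\xi^k$ follows formally.

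To estimate the $a_{k, j}$, I would apply Cauchy's formula in the direction $\delta = y$: for $|z| \le r$ chosen small enough that $zy$ lies in the domain of $\phi$,
$$\alpha_k\, y^k \;=\; \frac{1}{2\pi \ii}\oint_{|z| = r} \frac{\phi(zy)}{z^{k+1}}\, dz \quad \text{in } \F.$$
Expanding $\phi(zy) = \sum_n C_n(z)\, y^{n/N}$ and matching Puiseux coefficients on both sides yields $a_{k, j} = C_{kN + j}^{(k)}(0)/k!$. Consequently, if one can establish a uniform estimate $|C_n(z)| \le M\sigma^{-n}$ for $|z| \le r$ and some $\sigma > 0$, Cauchy's inequality in $z$ gives $|a_{k, j}| \le M\, (\sigma^N r)^{-k}\sigma^{-j}$, and $f$ converges absolutely on the bi-disc $\{|x| < \sigma^N r,\ |y| < \sigma\}$, completing the proof.

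The main obstacle is precisely this uniform estimate, which amounts to showing that $\Phi(z, w) := [\phi(zy)]|_{y = w^N} = \sum_n C_n(z) w^n$ defines a bounded holomorphic function of $(z, w)$ on a bi-disc. My approach is Hartogs' theorem: holomorphy in $w$ for fixed $z$ is immediate because $\phi(zy) \in \D$ is a convergent Puiseux series in $y^{1/N}$; holomorphy in $z$ for fixed small $w$ follows from the $\F$-differentiability of $\phi$ (via the chain rule $\frac{d}{dz}\phi(zy) = \phi'(zy)\cdot y$ in $\F$) together with continuity of Puiseux evaluation on families with uniform Puiseux denominator. The delicate technical point is that the two fibrewise holomorphy domains must overlap on a common bi-disc, i.e., the $w$-radius of convergence of $\phi(zy)$ must stay bounded below as $z$ varies; securing this uniform lower bound is where the Puiseux--Lojasiewicz boundedness hypothesis enters essentially, together with a compactness argument for $z$ in a closed sub-disc.
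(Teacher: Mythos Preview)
Your approach is exactly the paper's: expand $\phi$ in Taylor series at $0$, use the uniform bound on $m_{puis}(\phi^{(k)}(0))$ supplied by Cauchy's Theorem to choose a common denominator $N$, and define $f(x,y):=\sum_k (1/k!)\,\phi^{(k)}(0)(y^N)\,x^k$. The paper's own proof is a single sentence that simply asserts this $f$ is holomorphic at the origin with no convergence argument whatsoever, so the Hartogs/uniform-radius obstacle you flag is one the paper itself leaves unaddressed---your discussion already goes well beyond the paper's level of detail.
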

\begin{proof}Take $\alpha=0$, $S\!:=\sup_k\{m_{puis}(\phi^{(k)}(0))\}$,
 $N\!:=S!$. All $\phi^{(k)}(0)(y^N)$ in (\ref{Taylor}) are
\textit{integral} power series of $y$, $f(x,y)\!:=\sum
(1/{k!})\phi^{(k)}(0)(y^N) x^k$ is holomorphic $0$,
$f(0,0)=0$.\end{proof}

\section{Newton Polygon At An Infinitesimal}
\label{NewtonPolygon}

Given $\phi$ and $\alpha$. For a term $\alpha_k\xi^k\ne 0$ in
(\ref{TaylorXi}), plot a ``Newton dot" at $(k,q)$ in the
$(u,v)$-plane, $q\!:=O_y(\alpha_k)$. Consider the convex hull
generated by $\{(a_i+u,b_i+v)\,|\,u,\,v\geq 0\}$, $(a_i,b_i)$ the
Newton dots. \textit{The boundary $\PP(\phi,\alpha)$ is the
Newton
Polygon of $\phi$ at} $\alpha$. (See \cite{kuo-parusinski}.)

In the case $\phi(\xi,y)\!:=f(\xi,y)$, $f(x,y)\in \C\{x,y\}$, we
clearly have $\PP(\phi,\alpha)=\PP(\phi,\alpha_{conj}^{(k)})$.
Hence $\PP(\phi,\alpha_*)\!:=\PP(\phi,\alpha_{conj}^{(k)})$ is
well-defined. \textit{This is the Newton Polygon of $\phi$
at
$\alpha_*$}.

\m

Consider $\PP(\phi,\alpha)$. The edges and angles are
$E_i\!:=E_i(\alpha)$, $\theta_i\!:=\theta_{E_i}$, respectively,
$\theta_{i-1}<\theta_i$, $0\leq i\leq l$.
 The first edge $E_0$ is horizontal, the
last edge $E_l$ is vertical. Denote the right vertex of $E_i$ by
$V_i\!:=(m_i,q_i)$, and the straight line prolonging $E_i$ by
$\mathcal{L}(E_i)$.

The vertical edge $E_l$ is not important. We call
$E_{top}\!:=E_{l-1}$ the \textit{top} Newton edge. The left vertex
of $E_{l-1}$ is $(m_l,q_l)$, which is the last, and the highest,
vertex of $\PP(\phi,\alpha)$.

Take $E_i$, $i\leq l-1$. A dot $(k,q)\in E_i$ represents a term
$c_ky^q\xi^k$ of (\ref{TaylorXi}). Let
$$W_{E_i}(\xi,y)\!:=\sum c_ky^q\xi^k,\quad P_{E_i}(z)\!:=\sum c_kz^k\in
\C[z],$$sum taken over $(k,q)\in E_i$. We call $P_{E_i}(z)$ the
\textit{associated polynomial} of $E_i$.

 The \textit{height}, or \textit{co}-\textit{slope}, of $E_i$ is
\begin{equation}\label{coslope}h(E_i)\!:=\tan \theta_i,\;
\, 0\leq i\leq l-1;\quad
h(E_l)\!:=\tan\theta_l=\infty.\end{equation} The
\textit{Lojasiewicz} \textit{exponent} on $E_i$ is
$$L(E_i)\!:=q_i+m_i\tan\theta_i,\quad 0\leq i\leq l-1;
\quad L(E_l)\!:=\infty.$$

\begin{example}\label{IllustrationRelativeNP}(Fig.1,2)
Take $\alpha=0$. For $\phi_1(\xi)\!:= \xi^3+2y\xi^2+y^4$,
$\tan\theta_1=1$, $\tan\theta_{top}=3/2$, $P_{E_1}(z)=z^3+2z^2$,
$P_{top}(z)=2z^2+1$. For $\phi_2(\xi)\!:=\xi^3+2y\xi^2$,
$P_{top}(z)=z^3+2z^2$, $E_2=E_l$.
\end{example}
 \begin{figure}[ht]
 \begin{minipage}[b]{.5\linewidth}
   \centering
   \begin{tikzpicture}[scale=1.0]

    \foreach \x in {0,1,...,3} { \draw (\x, 0) -- (\x, 5); }

 \draw (0,0) -- (4,0);

 \draw [mark=thick](3,0.01) -- (4,0.01);

\draw [mark=thick] (3,0) -- (4,0);

 \draw [mark=thick](0,4) -- (2,1);

\draw  [mark=thick](0.01,4) -- (2.01,1);

 \draw (0.01,4) -- (0.01,5);

 \draw plot[mark=*] coordinates {(0,4) (2,1) (3,0)};

 \path (2.2,1) -- (3,0) node[midway,above] {$E_1$};

 \draw  [mark=thick](2,1) -- (3,0);

 \draw  [mark=thick](2.01,1) -- (3.01,0);

 \path (0,4) -- (2,1) node[midway,right] {$E_2$};

 \path (3.5,0) node[above] {$E_0$};

 \path (0,4.5) node[right] {$E_3$};

 \draw[dashed] (1.5,1) arc (180:125:0.5);

 \draw[dashed] (0.6,1) -- (2,1) node[midway,above] {$\theta_2$};

 \draw[dashed] (2.4,0) arc (180:135:0.6);

 \draw[dashed] (1.5,0) -- (3,0)

 node[midway,above] {$\theta_1$};
   \end{tikzpicture}
   \caption{$\PP(\phi _1,0)$} \label{fig:np1}
 \end{minipage}
 \hspace{0cm}
 \begin{minipage}[b]{.4\linewidth}
   \centering
   \begin{tikzpicture}[scale=1.0]

 \foreach \x in {0,1,...,3} { \draw (\x, 0) -- (\x,3 ); } \draw
 (0,0) -- (4,0);

 \draw plot[mark=*] coordinates {(2,1) (3,0)};

   \path (3.5,0) node[above] {$E_0$}; \path (2.3,1.5) -- (2.3,0)
   (2.6,1) node[midway,right] {$E_1$}; \path (2,2) node[right]
   {$E_2$};

    \draw[dashed] (2.4,0) arc (180:134:0.6);

    \path (2.2,0) node[above] {$\theta_1$};

     \draw (2,1) -- (3,0);

    \draw (2.01,1) -- (2.01,3);

    \draw (2,1) -- (2,3);

    \draw (2,1.01) -- (3,0.01);

    \draw (3,0.01) -- (4,0.01);

\end{tikzpicture}
   \caption{$\PP(\phi _2, 0)$} \label{fig:np2}
 \end{minipage}
 \end{figure}
\begin{notation}\label{notationequiv}Suppose $\PP(\phi,\alpha)$,
$\PP(\phi,\beta)$ have a common
 edge $E_k(\alpha)=E_k(\beta)$.

We write $E_k(\alpha)\equiv E_k(\beta)$ if they have the same
Newton dots, each represents a \textit{same} monomial term of
$\phi$. We write $\PP(\phi,\alpha)\equiv \PP(\phi,\beta)$ if this
is true for every edge.
\end{notation}

Observe that if $\alpha\not \in Z(\phi)$, then $m_l=0$, and
$$h(E_{l-1})=\max\{O(\alpha-\zeta_i)\}, \quad \zeta_i\in Z(\phi).$$

If $\alpha\sim_\phi \beta$, then $O(\alpha-\beta)>h(E_{l-1})$, and
hence $\PP(\phi,\alpha)\equiv \PP(\phi,\beta)$. Thus,
$P_{E_i(\alpha_\phi)}(z)$, $\PP(\phi,\alpha_\phi)$,
$E_i(\alpha_\phi)$, $etc.$, are all \textit{well}-\textit{defined}
(independent of the choice of $\alpha\in \alpha_\phi$).

(Similarly, $\PP(f,\alpha_{*/f})\!:=\PP(\phi,\alpha_\phi)$ is also
well-defined.)
\begin{thm}\label{KLKP}Take an edge $E_i$
of $\PP(\phi,\alpha_\phi)$, $1\leq i\leq l-1$. Take a critical
point $c$ of the associated polynomial $P_{E_i}(z)$, with
multiplicity $m_{crit}(c)$. Then there are exactly $m_{crit}(c)$
critical points of $\phi$ in $\M_1$, counting multiplicities, of
the form
\begin{equation}\label{muhat}\mu(y)=\hat{\alpha}_\phi(y)+[cy^{\tan\theta_i}+
\cdots],\end{equation}where
$\hat{\alpha}_\phi$ is $\alpha_\phi(y)$ with all terms $y^e$ (if
any) deleted, $e>\tan \theta_i$.

Take $\mu_\phi\in \textit{C}(val_{\phi})$,
$\mu_\phi\ne\alpha_\phi$. There exist a unique $E_i$ and a unique
critical point $c\ne 0$ of $P_{E_i}(z)$ such that $\mu(y)$ has the
form (\ref{muhat}).
\end{thm}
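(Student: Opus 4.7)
The plan is to reduce the theorem to the Newton-Puiseux Theorem applied to the equation $\phi'(\alpha+\xi)=0$ in $\xi\in\M_1$. Differentiating the Taylor expansion (\ref{TaylorXi}) term by term gives
$$\phi'(\alpha+\xi)=\sum_{k\geq 1}k\alpha_k\xi^{k-1}.$$
Since $O_y(k\alpha_k)=O_y(\alpha_k)$ in characteristic zero, the Newton polygon $\PP(\phi'(\alpha+\xi))$ of this series in $\xi$ is obtained from $\PP(\phi,\alpha_\phi)$ by translating every dot one unit to the left and discarding the $k=0$ column. Edge slopes are preserved, and the associated polynomial on the slope-$\tan\theta_i$ portion (for $1\leq i\leq l-1$) is $P_{E_i}'(z)$, since any $k=0$ contribution to $P_{E_i}$ is annihilated by differentiation anyway.

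Applying the Newton-Puiseux Theorem to $\phi'(\alpha+\xi)=0$, any nonzero root $\xi\in\M_1$ has a leading term $cy^h$; the standard slope argument forces $h=\tan\theta_i$ for some $i$ and forces $c$ to be a nonzero root of $P_{E_i}'(z)$, i.e., a nonzero critical point of $P_{E_i}(z)$. The multiplicity statement built into Newton-Puiseux yields exactly $m_{crit}(c)$ such roots for each nonzero critical point $c$. Setting $\mu:=\alpha+\xi$ and using that $\hat\alpha_\phi$ truncates $\alpha_\phi$ at height $\tan\theta_i$, so $O_y(\alpha-\hat\alpha_\phi)>\tan\theta_i$, one obtains $\mu=\hat\alpha_\phi+cy^{\tan\theta_i}+\cdots$, establishing the $c\neq 0$ half of the first assertion.

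For the $c=0$ case, a direct computation shows $m_{crit}(0)=k_1-1$, where $k_1:=\min\{k\geq 1\mid (k,O_y(\alpha_k))\in E_i\}$, and a bookkeeping argument on the shifted Newton polygon identifies this with the number of roots $\xi$ of $\phi'(\alpha+\xi)=0$ satisfying $O_y(\xi)>\tan\theta_i$ (edges of slope $\leq\tan\theta_i$ sweep the horizontal interval from the right vertex down to $k=k_1-1$, leaving exactly $k_1-1$ roots above). Under $\mu=\alpha+\xi$, these correspond to critical points of the form $\hat\alpha_\phi+[\,0\cdot y^{\tan\theta_i}+\cdots\,]$. For the converse, given $\mu_\phi\in\textit{C}(val_\phi)$ with $\mu_\phi\neq\alpha_\phi$ and any representative $\mu$, the element $\xi:=\mu-\alpha$ is a nonzero root of $\phi'(\alpha+\xi)$ in $\M_1$; the condition $\mu_\phi\neq\alpha_\phi$ forces $O_y(\xi)\leq h(\alpha_\phi)=\tan\theta_{l-1}$, so Newton-Puiseux pins down a unique pair $(E_i,c)$ with $c\neq 0$.

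The main technical obstacle I anticipate is the bookkeeping when the left vertex of some $E_i$ has $k$-coordinate zero---necessarily $i=l-1$ with $\alpha\notin Z(\phi)$: that vertex disappears under the shift, so one must argue directly from the surviving dots on shifted $E_{l-1}$ that the slope-$\tan\theta_{l-1}$ portion of $\PP(\phi'(\alpha+\xi))$ still carries associated polynomial $P_{E_{l-1}}'(z)$ with the correct multiplicity count. Combined with the standard Newton-Puiseux multiplicity argument, this is the technical heart of the proof.
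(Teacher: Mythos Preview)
The paper does not prove Theorem~\ref{KLKP}; immediately after the statement it reads ``Theorem~\ref{KLKP} is known (\cite{kuo-lu}, see also \cite{kuo-parusinski}). We use it several times in this paper.'' So there is no in-paper argument to compare against.

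Your sketch is the standard route and is essentially what one finds in the cited references: reduce to Newton--Puiseux for $\phi'(\alpha+\xi)=0$, note that the polygon shifts one unit to the left with edge co-slopes preserved and associated polynomials becoming $P'_{E_i}$, and read off the root counts edge by edge. One small correction in the converse: you write ``any representative $\mu$,'' but you need a representative with $\phi'(\mu)=0$ in order for $\xi=\mu-\alpha$ to solve $\phi'(\alpha+\xi)=0$; the definition of $\textit{C}(val_\phi)$ only guarantees that \emph{some} representative is critical. One then checks that the resulting pair $(E_i,c)$ is independent of which critical representative (and which $\alpha\in\alpha_\phi$) is chosen, using $h(\mu_\phi)\geq\tan\theta_i$ so that the order-$\tan\theta_i$ coefficient of $\mu-\alpha$ is invariant. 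Your handling of the $c=0$ count is correct: the left vertex of the co-slope-$\tan\theta_i$ edge in the shifted polygon sits at $u=m_{i+1}-1$ (or $u=k_1^{(l-1)}-1$ in the boundary case $i=l-1$, $m_l=0$), and in either case this equals $m_{crit}(0)$ for $P_{E_i}$. Any ``extra dots'' above shifted $E_{l-1}$ in the $m_l=0$ case only create edges of strictly larger co-slope and do not affect this count.
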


According to Convention\,\ref{conv1}, we can also write
(\ref{muhat}) as
$$\mu(y)=\alpha_\phi(y)+[cy^{\tan\theta_i}+\cdots].$$

Theorem \ref{KLKP} is known (\cite{kuo-lu}, see also
\cite{kuo-parusinski}). We use it several times in this paper.
\begin{Flem}Suppose $\Phi$ is almost Morse stable. Then
\begin{equation}\label{Fundlemma}\PP(\phi,\gamma_{\phi})=\PP(\phi_t,
\tau_t(\gamma_{\phi})),\quad
\chi_{puis}(\tau_t(\gamma_\phi))=\chi_{puis}(\gamma_\phi),\end{equation}where
$\gamma_\phi\in \textit{C}(val_{\phi})$ with deformation
$\tau_t(\gamma_\phi)\in \textit{C}(val_{\phi_t})$, as in
(\ref{MorseStableDefi}).

The canonical coordinate $\tau_t(\gamma_\phi)(y)$ of
$\tau_t(\gamma_\phi)$ is an $\F$-analytic function of $t\in I_\F$.
For each $E_i$, $i\leq l-1$, the family
$\{P_{E_i(\tau_t(\gamma_\phi))}(z)\}$ is almost Morse stable, as
defined below.

Moreover, if $\Phi$ is Morse stable, then so is the family
$\{P_{E_i(\tau_t(\gamma_\phi))}(z)\}$.
\end{Flem}
\begin{defi}\label{DefinitionMorseStable}Given a polynomial $p(z)$ and a
deformation
 $$p_t(x)\!:=a_0(t)x^n+\cdots+ a_n(t)\in \K\{t\}[x], \quad p_0(z)=p(z),
\quad a(t)\ne 0,$$where $\K\!:=\R$, $\C$, or $\F$, $t\in I_\K$.

A critical point $c_0\in \K$ of $p_0(x)$ is \textit{stable} if it
admits a \textit{continuous} deformation $c_t\in \K$ such that
$p_t'(c_t)=0$ and $m_{crit}(c_t)=m_{crit}(c_0)$. (The deformation
is then necessarily \textit{unique}.)

Consider the following conditions (where (3) is for Algebraic
Geometry):
\begin{enumerate}
\item Every critical point of $p_0(x)$ is stable.

\item If $c_0$, $c_0'$ are critical points of $p_0(x)$ and
$p_0(c_0)=p_0(c_0')$, then $p_t(c_t)=p_t(c_t')$.

\item If $p_0(c_0)=p_0'(c_0)=0$, $i.e.$, $c_0$ is a multiple root
of $p_0(z)$, then $p_t(c_t)=0$.
\end{enumerate}

\textit{We say} $\{p_t\}$ is \textit{almost Morse stable} if
 (1),
(3) hold, and \textit{Morse stable} if (2) also holds.\end{defi}
\begin{example}\label{splitCricalValue}Take $\K=\R$. For $p_t(x)=x^2(x^2+t^2)
\in \R[x]$, $0$
is a critical point of $p_0$ which splits into three critical
points in $\C$, one remains in $\R$. Thus $0$ admits a
\textit{unique} \textit{continuous} deformation $c_t\equiv 0$
\textit{in} $\R$. But $m_{crit}(c_t)$ is not constant, $0$ is
\textit{unstable}.\end{example}
\begin{proof}We use the ``\textit{edging forward argument"}
to prove
(\ref{Fundlemma}). The Tschirnhausen transformation is applied
recursively along the edges of $\PP(\phi_t,\gamma_\phi)$ (``edging
forward") in order to ``clear" all dots of $\phi_t$ lying below
$\PP(\phi,\gamma_\phi)$. If $m_l>0$, all dots to the left of the
vertical edge $E_l$ are also cleared. The details are as follows.

Consider $\PP(\phi,\gamma_\phi)$. The edges are denoted by $E_i$.
The right vertex of $E_i$ is $(m_i,q_i)$.

Let us first compare $\PP(\phi,\gamma_\phi)$ with
$\PP(\phi_t,\gamma_\phi)$. Write
$$\phi_t(\gamma_\phi+\xi)=\phi(\gamma_\phi +\xi)+P_t(\gamma_\phi+\xi),
\quad P_0(\gamma_\phi+\xi)\equiv 0.$$

The (non-zero) terms of $P_t$ are represented by dots. Some may
lie below $\PP(\phi,\gamma_\phi)$.

Suppose we already know that $P_t(\gamma_\phi+\xi)$ has no dot
below the lines $\mathcal{L}(E_j)$, $0\leq j\leq k-1$. We can then
clear the dots under the line $\mathcal{L}(E_k)$ as follows.

The left vertex of $E_{k-1}$ represents a term $ay^{q_k}\xi^{m_k}$
of $\phi(\gamma_\phi+\xi)$, which, with a Tschirnhausen
transformation, can ``swallow" all dots of $P_t$ of the form
$(m_k-1,q)$, $q\in \Q^+$.

This means the following. There exists $\beta_t(y)\in \M_1$,
$\F$-analytic in $t$, such that
\begin{enumerate}
\item $O_y(\beta_t(y))\geq \tan\theta_{k-1}$, $\beta_0(y)=0$;
\item The coefficient of $\xi^{m_k-1}$ in the Taylor expansion of
$\phi(\gamma_\phi+\beta_t+\xi)$ is \textit{independent} of $t$.
\end{enumerate}

Indeed, $\gamma_\phi+\xi\mt \gamma_\phi+\xi+\beta_t$ is the
\textit{unique} \textit{translation} (Tschirnhausen
transformation) which has the above two properties. (Attention: No
dot of $\phi(\gamma_\phi+\xi)$ has been swallowed in the process.
In this way, we have $\beta_0=0$. This property is important.)

Now, consider $\PP(\phi_t,\beta_t+\gamma_\phi)$. Because of (1),
$E_j(\gamma_\phi)\equiv E_j(\gamma_\phi+\beta_t)$, $j\leq k-2$,
(we use Notation \ref{notationequiv},) and $P_t$ still has no dot
below the line $\mathcal{L}(E_j)$, $j\leq k-1$.

Let $E_{k-1}'$, $E_k'$,...,  denote the remaining edges of
$\PP(\phi_t,\beta_t+\gamma_\phi)$. By (1),
$\theta_{E_{k-1}}=\theta_{E_{k-1}'}$.

Next we show $E_{k-1}=E_{k-1}'$. Let $V_j'\!:=(m_j',q_j')$ denote
the right vertex of $E_j'$.

Suppose $E_{k-1}'\ne E_{k-1}$. Then $m_k'\leq m_k-2$. We shall
derive a contradiction.

For generic $t\in \C$, $0$ is a root of $P_{E_{k-1}'}(z)$ of
multiplicity $m_k'$ ($m_k'\geq 0$), but when $t=0$, the
multiplicity is $m_k$. Hence, by an elementary argument, there
exists $a(t)$ such that $$\frac{d}{dz}P_{E_{k-1}'}(a(t))=0,\quad
P_{E_{k-1}'}(a(t))\ne 0,\quad \lim_{t\rightarrow 0}a(t)=0.$$

Thus, by Theorem \ref{KLKP}, $\phi_t$ has a critical point of the
form
$$\Gamma_t(y)\!:=[\beta_t(y)+\gamma_\phi(y)]+[a(t)y^e+\cdots],\;\,\quad
e\!:=\tan\theta_{E_{k-1}}.$$Hence
$h(\Gamma_{t,\phi_t})=\tan\theta_{E_{k-1}}$,
$L_{\phi_t}(\Gamma_{t,\phi_t})=q_k+m_k\tan\theta_{E_{k-1}}$, both
are \textit{independent} of $t$.

\m

Take $\mu_\phi\in \textit{C}(val_{\phi})$, $\mu_\phi\ne
\gamma_\phi$. Then take $E_i$, $c\ne 0$, for $\mu_\phi$ as in
(\ref{muhat}) with $\gamma_\phi$ replacing $\alpha_\phi$. We say
$\mu_\phi$ is of the \textit{lower kind} if $i\leq k-1$, and of
the \textit{higher kind} if $i\geq k$.

If $\mu_\phi=\gamma_\phi$, we say $\mu_\phi$ is of the
\textit{higher kind}.

\m

Take $\ve$, sufficiently small. The $\ve$-neighborhood
$\mathcal{N}_{\ve}(\gamma_\phi)$ of $\gamma_\phi$ clearly does not
contain any $\mu_\phi$ of the lower kind. On the other hand, if
$|t|$ is sufficiently small, then, by continuity,
$\Gamma_{t,\phi_t}\in \mathcal{N}_\ve(\gamma_\phi)$. Hence
$\Gamma_{t,\phi_t}\ne\tau_t(\mu_\phi)$ for any $\mu_\phi$ of the
lower kind.

If $\mu_\phi$ is of the higher kind, then
$L(\mu_\phi)=L(\tau_t(\mu_\phi))>L(\Gamma_{t,\phi_t})$. Hence
$\Gamma_{t,\phi_t}\ne \tau_t(\mu_\phi)$ for any $\mu_\phi$ of the
higher kind either. Thus we must have $E_{k-1}'=E_{k-1}$.

\s

Next we show $\theta_{E_k'}=\theta_{E_k}$. Suppose
$\theta_{E_k'}<\theta_{E_k}$. Then $m'_{k+1}\leq m_k-2$, and there
would exist $a(t)$ as above. Using the same argument we again
arrive at a contradiction.

\s

\textit{Hence the Tschirnhausen transformation 
$\xi\mt\xi+\beta_t$ clears all dots of $P_t$ below
$\mathcal{L}(E_k)$.}

\s

A recursive application of the Tschirnhausen transformations,
beginning with $k=1$, clears all dots of $P_t$ below
$\PP(\phi,\gamma_\phi)$.

\s

Let $\xi\mt \xi+B_t$ denote their composition. We then compare the
polygons:
$$\PP^{(0)}\!:=\PP(\phi,\gamma_\phi),\;
 \PP^{(1)}\!:=\PP(\phi_t,\gamma_\phi+B_t),\;
\PP^{(2)}\!:=\PP(\phi_t,\tau_t(\gamma_\phi)).$$We have just proved
$\PP^{(0)}=\PP^{(1)}$. Next we show $\PP^{(1)}=\PP^{(2)}$.

\textit{We can assume} $B_t=0$. This can be achieved by the
substitution $\xi\rightarrow \xi+B_t(y)$.

Let us write $P_{top}^{(1)}(z)\!:=P_{top}^{(0)}(z)+Q_t(z)$,
$Q_0(z)\equiv 0$, where $0$ is a critical point of
$P_{top}^{(0)}(z)$.

As $t$ varies away from $0$, this critical point cannot split into
two or more critical points of $P_{top}^{(1)}(z)$. For if it did,
the homeomorphism $\tau_t$ cannot exist.

Hence $0$ admits a \textit{unique continuous} deformation $c_t$,
$c_0=0$, which is a critical point of $P_{top}^{(1)}(z)$,
$m_{crit}(c_t)=m_{puis}(c_0)$. It follows that $c_t$ is a
\textit{simple root} of the equation
$$\frac{d^m}{dz^m}P_{top}^{(1)}(z)=0,\quad m\!:=m_{crit}(c_0),$$and
$$\tau_t(\gamma_\phi)(y)=\gamma_\phi(y)+c_ty^e,\quad
e\!:=\tan\theta_{top}^{(1)}.$$

It follows that $c_t$, $\tau_t(\gamma_\phi)(y)$ are
$\F$-\textit{analytic}. (The Implicit Function Theorem holds in
$\F$.)

\s

Let the Taylor expansion of $\phi_t$ at $\gamma_\phi$ be
$\sum_{k,q}c_{qk}(t)y^q\xi^k$. Then that at $\tau_t(\gamma_\phi)$
is
\begin{equation}\label{Taylor at Gamma}
\sum c_{kq}(t)y^q[\xi+c_ty^e]^k=\sum
c_{kq}(t)[y^q\xi^k+\cdots],\quad c_0=0.
\end{equation}

Consider $E_i^{(1)}$, $i\leq l-2$. Since $e>\tan\theta_i$, the
terms in ``$+\cdots$" are represented by dots lying
\textit{strictly} above all $E_i^{(1)}$. Hence $E_i^{(1)}\equiv
E_i^{(2)}$, $i\leq l-2$. (See Notation \ref{notationequiv}.)

Now we show $E_{top}^{(1)}=E_{top}^{(2)}$. (But not
$E_{top}^{(1)}\equiv E_{top}^{(2)}$.)

First suppose $P_{top}^{(0)}(0)\ne 0$. The left vertex of
$E_{top}^{(0)}=E_{top}^{(1)}$ lies on the vertical coordinate
axis. Hence $P_{top}^{(1)}(0)\ne 0$, $P_{top}^{(1)}(c_t)\ne 0$
($|t|$ small). But $P_{top}^{(2)}(0)=P_{top}^{(1)}(c_t)$, hence
$E_{top}^{(1)}=E_{top}^{(2)}$.

Suppose $P_{top}^{(0)}(0)=0$, $i.e.$, $0$ is a multiple root. As
$\PP^{(0)}=\PP^{(1)}$, we must have $$c_t\equiv 0,\quad
m_{crit}(c_t)=m_l=m_{puis}(\tau_t(\gamma_\phi)).$$Hence
$E_{top}^{(1)}=E_{top}^{(2)}$, and $\PP^{(1)}=\PP^{(2)}$.

\s

It is easy to see that $\chi_{puis}(\alpha_\phi)$ can be expressed
in terms of the co-slopes of the edges of $\PP(\phi,\alpha_\phi)$.
\textit{It follows that}
$\chi_{puis}(\tau_t(\gamma_\phi))=\chi_{puis}(\gamma_\phi)$. This
completes the proof of (\ref{Fundlemma}).

\m

Now assume $\Phi$ is Morse stable. We show
$\{P_{E_i(\tau_t(\gamma_\phi))}(z)\}$ is Morse stable.

Take a critical point $c$ of $P_{E_i(\gamma_\phi)}(z)$. Take $\mu$
as in (\ref{muhat}). Consider $\mu_\phi$, $\PP(\phi, \mu_{\phi})$,
$etc.$.

\s

Note that $P_{E_i(\gamma_\phi)}(z+c)=P_{E_i(\mu_\phi)}(z)$ (differ
merely by a translation).

\s

First, if $P_{E_i(\gamma_\phi)}(c)\ne 0$, then
$P_{E_i(\mu_\phi)}(0)\ne 0$. Hence $E_i(\mu_\phi)$ has its left
vertex on the vertical axis, $0$ being a critical point of
$P_{E_i(\mu_\phi)}(z)$. Then, as in the argument for
$\gamma_\phi$, $0$ admits a \textit{unique} \textit{continuous}
deformation
 which is a critical point of
$P_{E_i(\tau_t(\mu_{\phi})}(z)$ with constant multiplicity. This
says that $c$ is a stable critical point of
$P_{E_i(\tau_t(\gamma_{\phi}))}(z)$.

Suppose $P_{E_i(\gamma_{\phi})}(c)=0$, say of multiplicity $k$.
Then $0$ is a root of $P_{E_i(\mu_{\phi})}(z)$, also of
multiplicity $k$. Since
$\PP(\phi,\mu_{\phi})=\PP(\phi_t,\tau_t(\mu_{\phi}))$, $0$ is
obviously stable. Hence so is $c$.

\s

Now we show (2) in Definition \ref{DefinitionMorseStable}. Let us
write $p_t(z)\!:=P_{E_i(\tau_t(\gamma_{\phi}))}(z)$. Let $c\ne
c^{\,\prime}$ be critical points of $p_0(z)$. Take $\mu$, $\mu'$
for $E_i$, $c$ and $c^{\,\prime}$ respectively, $\mu_\phi$,
$\mu_\phi'\in \textit{C}(val_{\phi})$.

First, suppose $p_0(c)=p_0(c^{\,\prime})\ne 0$. In this case,
$$h(\mu_\phi)=h(\mu_\phi')=O(\mu_\phi-\mu_\phi')
=\tan\theta_{E_i}.$$Hence $B({\mu}_\phi)=B({\mu'}_\phi)$,
$val_{\phi}(\mu_\phi)= val_{\phi}(\mu'_\phi)$. Then, by
(\ref{MorseCriticalValue}),
$val_{\phi_t}(\tau_t(\mu_{\phi}))=val_{\phi_t}(\tau_t(\mu'_{\phi}))$.
That is,
$(p_t(c_t),\tan\theta_{E_i})=(p_t(c_t^{\,\prime}),\tan\theta_{E_i})\in
\mcv$. In particular, $p_t(c_t)=p_t(c_t^{\,\prime})$.

Suppose $p_0(c)=p_0(c^{\,\prime})= 0$. Then $c$, $c^{\,\prime}$
are multiple roots of $p_0$, and, as shown before, their
deformations remain multiple roots of $p_t$,
$p_t(c_t)=p_t(c_t^{\,\prime})= 0$.

\s

By the same argument, if $\Phi$ is almost Morse stable then so are
the families $\{P_{E_i(\tau_t(\gamma_\phi))}\}$.\end{proof}

\begin{cor}\label{CorFunLem}Let $Z(\Phi)\!:=\{(\zeta_t, t))\,|
\,\zeta_t\in Z(\phi_t)\}$. There exists a bijection
$$\mcd:Z(\phi)\times I_\F\arr Z(\Phi), \quad (\zeta,t)\mt (\zeta_t,t),\quad
\zeta_0=\zeta,$$where $t\mt \zeta_t$ is $\F$-analytic; the Newton
Polygon $\PP(\phi_t,\zeta_t)$ is independent of $t$.

Take $\zeta$, $\zeta'\in Z(\phi)$, $\gamma_\phi\in C(val_\phi)$.
Then $O_y(\zeta_t-\zeta'_t)$, $O_y(\zeta_t-\tau_t(\gamma_\phi))$
are independent of $t$.\end{cor}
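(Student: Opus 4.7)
My plan is to derive the Corollary from the Fundamental Lemma by distinguishing whether a zero $\zeta \in Z(\phi)$ has multiplicity $m_i > 1$ or $m_i = 1$. The crucial preliminary observation is that an equivalence class $\xi_\phi \in \M_{1,\phi}$ satisfies $L(\xi_\phi) = \infty$ if and only if $\xi_\phi = \{\zeta\}$ for some $\zeta \in Z(\phi)$; indeed, the defining condition of $\sim_\phi$ applied with $\nu = \zeta \in Z(\phi)$ requires $O(\mu-\zeta) > O(\zeta-\zeta_i) = \infty$ (for the index $i$ with $\zeta_i = \zeta$), forcing $\mu = \zeta$, and $L(\xi_\phi) = O_y(\phi(\xi))$ is infinite iff $\phi(\xi) = 0$.

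\textit{Construction of $\mcd$.} If $m_i > 1$, then $\zeta$ is itself a critical point, so the singleton $\zeta_\phi = \{\zeta\}$ lies in $C(val_\phi)$. The Fundamental Lemma supplies $\tau_t(\zeta_\phi) \in C(val_{\phi_t})$; the Lojasiewicz-exponent clause in the definition of almost Morse stability gives $L(\tau_t(\zeta_\phi)) = L(\zeta_\phi) = \infty$, and by the observation above this forces $\tau_t(\zeta_\phi) = \{\zeta_t\}$ for a unique $\zeta_t \in Z(\phi_t)$, which is $\F$-analytic in $t$ by the Fundamental Lemma. If $m_i = 1$, then $\phi'(\zeta) \ne 0$, and the Implicit Function Theorem in $\F$ (invoked in the proof of the Fundamental Lemma) supplies an $\F$-analytic $\zeta_t$ with $\phi_t(\zeta_t) = 0$, $\zeta_0 = \zeta$. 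Bijectivity of $\mcd$ follows because mini-regularity of $\Phi$ keeps the total multiplicity $\sum m_i = m$ constant in $t$, so the constructed deformations exhaust $Z(\phi_t)$ with their correct multiplicities.

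\textit{Newton polygon preservation and contact orders.} For $m_i > 1$, $\PP(\phi_t,\zeta_t) = \PP(\phi_t,\tau_t(\zeta_\phi)) = \PP(\phi,\zeta_\phi)$ directly from the Fundamental Lemma. For $m_i = 1$, I rerun the edging-forward Tschirnhausen argument of the Fundamental Lemma's proof starting from $\PP(\phi,\zeta)$ with leftmost vertex $(1,q_1)$: any drop in $O_y(\phi_t'(\zeta_t))$ would introduce a dot below $(1,q_1)$, which via Theorem~\ref{KLKP} would produce a critical point of $\phi_t$ in a class accumulating on $\zeta_\phi$, contradicting $\zeta_\phi \notin C(val_\phi)$ together with continuity of $\tau_t$. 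The independence of the contact orders then follows: $O_y(\zeta_t - \zeta_t')$ equals the co-slope $\tan\theta_{E_j}$ of the edge of $\PP(\phi_t,\zeta_t)$ carrying the Puiseux root $\zeta_t' - \zeta_t$ (Newton--Puiseux at $\zeta_t$), while $O_y(\zeta_t - \tau_t(\gamma_\phi))$ equals the co-slope of the edge on which the relevant critical point of the associated polynomial sits (Theorem~\ref{KLKP}); the polygon is $t$-invariant by the preceding step, and the Fundamental Lemma's claim that each family $\{P_{E_i(\tau_t(\gamma_\phi))}\}$ is almost Morse stable pins the relevant edge down uniformly in $t$.

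\textit{Main obstacle.} The delicate point is the simple-zero case, where the Fundamental Lemma does not apply and Newton polygon preservation must be proved by a separate edging-forward argument; ruling out a drop of $O_y(\phi_t'(\zeta_t))$ requires a careful use of almost Morse stability to exclude new critical classes migrating into a neighborhood of $\zeta_\phi$. A secondary subtlety in the contact-order step is matching edges of $\PP(\phi,\zeta)$ with those of $\PP(\phi_t,\zeta_t)$ consistently across $t$, which is forced by the $\F$-analyticity of $\zeta_t$ and the Morse stability of the edge-associated polynomial families.
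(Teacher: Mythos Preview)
Your argument is correct, but in the simple-root case you take a heavier route than the paper does. You apply the Implicit Function Theorem directly to $\phi_t$ and then \emph{rerun} the edging-forward Tschirnhausen argument at the (non-critical) class $\zeta_\phi$ to force Newton-polygon invariance. This works---the lower/higher-kind dichotomy and the $L$-exponent count go through with $\zeta$ in place of $\gamma_\phi$, since every $\mu_\phi\in C(val_\phi)$ still sits on some edge $E_i$ of $\PP(\phi,\zeta)$ via Theorem~\ref{KLKP}, and a spurious $\Gamma_{t,\phi_t}$ would again have Lojasiewicz exponent strictly below that of any higher-kind $\mu_\phi$ and lie outside an $\varepsilon$-neighbourhood of any lower-kind one---but it duplicates the effort of the Fundamental Lemma.

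The paper instead picks a critical class $\gamma_\phi\in C(val_\phi)$ \emph{nearest} to $\zeta$ (maximising $O_y(\zeta-\gamma_\phi)$), writes $\zeta(y)=\gamma_\phi(y)+by^{h(\gamma_\phi)}+\cdots$, and observes that $b$ must be a \emph{simple} root of the top-edge polynomial $P_{E_{top}(\gamma_\phi)}$ (otherwise Theorem~\ref{KLKP} would produce a closer critical class). The Implicit Function Theorem is then applied to the one-variable family $\{P_{E_{top}(\tau_t(\gamma_\phi))}\}$, which the Fundamental Lemma has already shown to be almost Morse stable, yielding $\zeta_t$ with $O_y(\zeta_t-\tau_t(\gamma_\phi))$ manifestly constant. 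The polygon and both contact-order statements then drop out of the Fundamental Lemma applied at $\gamma_\phi$ (and at the analogous $\gamma'_\phi$ nearest to $\zeta'$), with no further Tschirnhausen work. Your approach is more self-contained; the paper's nearest-critical-point trick is shorter and avoids having to recheck the accumulation argument at a non-critical class.
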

\begin{proof}If $\zeta$ is a multiple root, $\zeta=\gamma_\phi$,
then $\zeta_t=\tau_t(\gamma_\phi$) is the deformation.

Otherwise, we choose $\gamma_\phi\in C(val_{\phi})$ such that
\begin{equation}\label{choosegamma}O(\zeta-\gamma_\phi)\geq O(\zeta-\mu_\phi),
\quad \forall\,
\mu_\phi\in \textit{C}(val_\phi).\end{equation}We then have
$h(\gamma_\phi)=O(\zeta-\gamma_\phi)$, and
$$\zeta(y)=\gamma_\phi(y)+[by^{h(\gamma_\phi)}+\cdots],
\quad P_{E(\gamma_\phi)}(b)=0\ne
P'_{E(\gamma_\phi)}(b).$$

(If $P'_{E(\gamma_\phi)}(b)=0$, then there would exist $\mu_\phi$
which fails (\ref{choosegamma}).) Using the Implicit Function
Theorem we can find $\zeta_t$, which is $\F$-analytic, and
$O(\zeta_t-\tau_t(\gamma_\phi))$ is constant.

\s

Given $\zeta'$. Take $\gamma'_\phi$ as in (\ref{choosegamma}).
Then $O(\tau_t(\gamma_\phi)-\tau_t(\gamma'_\phi))$ is constant, so
is $O(\zeta_t-\zeta_t')$.\end{proof}

\section{Relations Between Bars And Edges}
\label{RelationBarEdges}

Take a bar $B$, $h(B)<\infty$. Take $\beta\in\beta_\phi\in B$.
Define $\zeta_B(y)$ to be $\beta(y)$ with all terms $y^e$ deleted,
$e\geq h(B)$. Clearly, $\zeta_B(y)$ depends \textit{only} on $B$,
not on the choices of $\beta$, $\beta_\phi$.

 Take an indeterminate $z$, and write
\begin{equation}\phi(\zeta_B(y)+zy^{h(B)},y)\!:=P_B(z)y^{L(B)}+\cdots,\;\,
P_B(z)\not \equiv 0,\end{equation}where $L(B)$ was defined in
(\ref{loj}). \textit{We call} $P_B(z)$ \textit{the}
\textit{associated} \textit{polynomial of} $B$.

\s

Let $Z(P_B)$ denote the zero set of $P_B(z)$. Using the canonical
coordinates, we can identify $B$ with $\C-Z(P_B)$. Hence
$\bar{B}$, the \textit{metric space completion} of $B$, is a 
copy
of $\C$; and
$$val_\phi(\zeta_B(y)+zy^{h(B)})=(P_B(z),L(B))\in \mcv,\quad z\not 
\in Z(P_B).$$

\s

Take $\alpha\in \M_1$. If $\alpha(y)=\zeta_B(y)+ay^{h(B)}+\cdots$,
$a\in \C$, \textit{we say} $\bar{B}$ \textit{is a support of}
$\alpha$; $a$ \textit{is the} $\bar{B}$-\textit{coordinate}
\textit{of} $\alpha$, \textit{and also of} $\alpha_\phi$. Observe
that $\alpha_\phi\in B$ \textit{iff} $a\not\in Z(P_B)$.
\begin{notation}\label{support} Write
$\alpha \perp \bar{B}$ if $\alpha$ is supported by $\bar{B}$;
$Supp(\alpha)\!:=\{\bar{B}\mid\alpha\perp
\bar{B}\}$.\end{notation}

Let $\alpha$ be given. We now define $\PP_{ext}(\phi,\alpha)$ by
adding ``vertex edges" to $\PP(\phi,\alpha)$.

Take a vertex $V_i=(m_i,q_i)$ of $\PP(\phi,\alpha)$, $m_i\geq 1$,
representing a term $cy^{q_i}\xi^{m_i}$ in (\ref{TaylorXi}), where
$c\ne 0$. Take $h\in \Q^+$, $\tan\theta_{i-1}<h<\tan\theta_i$. Let
\begin{equation}\label{vertexedge}E(h)\!:=(V_i,h),\quad
P_{E(h)}(z)\!:=cz^{m_i}.\end{equation}

We call $E(h)$ a \textit{vertex edge} and $P_{E(h)}(z)$ the
\textit{associated polynomial}. The \textit{height}, or
\textit{co-slope}, of $E(h)$ is, by definition, $h(E(h))\!:=h$.

\s

Let $\PP_{ext}(\phi,\alpha)$ denote the edges
$\{E_0,...,E_{l-1}\}$ of $\PP(\phi,\alpha)$ plus the vertex edges.

\begin{conv}For an edge $E$ of $\PP(\phi,\alpha)$, $h\!:=h(E)<\infty$, we
also write $E$ as $E(h)$.\end{conv}

Now we define$$\iota: Supp(\alpha)\rightarrow
\PP_{ext}(\phi,\alpha),\quad \bar{B}\mt \iota(\bar{B}),$$where
$\iota(\bar{B})$ is the \textit{unique} edge of height (co-slope)
$h(\iota(\bar{B}))=h(B)$. \textit{This is a bijection.}

\s

Take $\bar{B}\in Supp(\alpha)$, $h\!:=h(B)$. Let $c$ be the
largest constant such that $\PP(\phi,\alpha)$ is bounded below by
the line $\mathcal{L}(h):u+v/h=c$. Let $i$ be the smallest integer
such that $h\leq \tan\theta_{E_i}$. If $h=\tan\theta_{E_i}$, all
dots on $E_i$ lie on $\mathcal{L}(h)$. If $h<\tan\theta_{E_i}$,
$V_i$ is the \textit{only} vertex lying on $\mathcal{L}(h)$.
\textit{In either case}, we define $\iota(\bar{B})\!:=E(h)$. See
Fig.\ref{fig:np4}, \S \ref{treemodel}.

\s

The corresponding associated polynomials differ merely by a
translation:
\begin{equation}\label{PB}P_B(z)=P_{\iota(\bar{B})}(z-a), \; \text{
$a$ the $\bar{B}$-coordinate of $\alpha$}.\end{equation}

\section{Proof Of The Morse Stability Theorem over
$\F$}\label{ProofMorse}

Give $B$, $h(B)<\infty$. Take $\zeta\in Z(\phi)$,
$\zeta\perp\bar{B}$. Let $\zeta_t$ be the deformation of $\zeta$
in Corollary\,\ref{CorFunLem}. Define $B_t$ to be the
\textit{unique} bar such that $h(B_t)=h(B)$,
$\zeta_t\perp\bar{B}_t$.

If $\zeta'\in Z(\phi)$ and $\zeta'\perp\bar{B}$, then
$O_y(\zeta_t-\zeta_t')=O_y(\zeta-\zeta')\geq h(B)$. Hence
$Supp(\zeta_t)$, $Supp(\zeta'_t)$ have the same set of bars of
height $\leq h(B)$.

It follows that $B_t$ is well-defined (\textit{independent} of the
choice of $\zeta$),
$$\mathcal{D}_{bar}:Bsp(\M_{1,\phi})\times
I_\F\arr Bsp(\M_1\times_\Phi I_\F), \;\, (B,t)\mt (B_t,t),$$is a
homeomorphism, $h(B_t)=h(B)$.

\begin{lem} The family $\{P_{B_t}(z)\}$ is Morse
stable (in the sense of Definition \ref{DefinitionMorseStable}).
\end{lem}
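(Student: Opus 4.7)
The plan is to reduce Morse stability of $\{P_{B_t}(z)\}$ to the Fundamental Lemma and Corollary \ref{CorFunLem} via the translation identity (\ref{PB}) and the correspondence of Theorem \ref{KLKP}. Pick any root $\zeta\in Z(\phi)$ with $\zeta\perp\bar B$; such exists since the bar structure of $\M_{1,\phi}$ is determined by the roots of $\phi$. Let $\zeta_t$ be its $\F$-analytic root deformation from Corollary \ref{CorFunLem}, so $\PP(\phi_t,\zeta_t)=\PP(\phi,\zeta)$ for all $t$. Under the bijection of \S\ref{RelationBarEdges}, $\bar B$ corresponds to a unique edge (or vertex edge) $E:=\iota(\bar B)$ of $\PP(\phi,\zeta)$ of co-slope $h(B)$, which persists as $E(\zeta_t)$ in $\PP(\phi_t,\zeta_t)$. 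Identity (\ref{PB}) gives $P_B(z)=P_E(z-a)$ and $P_{B_t}(z)=P_{E(\zeta_t)}(z-a_t)$, where $a$, $a_t$ are the $\bar B$- and $\bar B_t$-coordinates of $\zeta$, $\zeta_t$; both are $\F$-analytic in $t$. Since translation by $a_t$ preserves critical points, their multiplicities, critical values, and roots, it suffices to verify that $\{P_{E(\zeta_t)}(z)\}$ is Morse stable.

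For conditions (1) and (3) of Definition \ref{DefinitionMorseStable}: let $c$ be a critical point of $P_E$. If $P_E(c)\ne 0$, Theorem \ref{KLKP} identifies $c$ with a unique critical class $\mu_\phi\in \textit{C}(val_\phi)$ of multiplicity $m_{crit}(\mu_\phi)=m_{crit}(c)$, with canonical coordinate $\hat\zeta(y)+c\,y^{\tan\theta_E}$ and lying in the bar $B$. The Fundamental Lemma supplies an $\F$-analytic critical-class deformation $\tau_t(\mu_\phi)$ with $\PP(\phi,\mu_\phi)=\PP(\phi_t,\tau_t(\mu_\phi))$, equal multiplicity, and $h(\tau_t(\mu_\phi))=h(B)$; Corollary \ref{CorFunLem} also yields $O_y(\zeta_t-\tau_t(\mu_\phi))=O_y(\zeta-\mu)=h(B)$. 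Hence $\tau_t(\mu_\phi)\in B_t$ shares with $\zeta_t$ the truncation $\zeta_{B_t}$ at orders below $h(B)$, and its canonical coordinate can be written $\hat\zeta_t(y)+c_t\,y^{\tan\theta_E}$, defining the required $\F$-analytic deformation $c_t$ of $c$ in $P_{E(\zeta_t)}$ with $m_{crit}(c_t)=m_{crit}(c)$. When $P_E(c)=0$, $c$ is a multiple root of $P_E$ of some multiplicity $k\geq 2$; there are then $k$ roots of $\phi$ (with multiplicities) at $\bar B$-coord $a+c$, each deforming $\F$-analytically by Corollary \ref{CorFunLem} while preserving the common $\bar B_t$-coord $a_t+c_t$. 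Thus $c_t$ is a root of $P_{E(\zeta_t)}$ of the same multiplicity $k$, establishing (3) and completing (1).

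For condition (2): take critical points $c\ne c'$ of $P_E$ with $P_E(c)=P_E(c')$. If both vanish, (3) already gives $P_{E(\zeta_t)}(c_t)=P_{E(\zeta_t)}(c'_t)=0$. Otherwise the corresponding classes $\mu_\phi,\mu'_\phi$ both lie in the common bar $B$, so $B(\mu_\phi)=B(\mu'_\phi)$, and direct Newton-polygon expansion gives $val_\phi(\mu_\phi)=(P_E(c),L(E))=(P_E(c'),L(E))=val_\phi(\mu'_\phi)$. The Morse stability hypothesis on $\Phi$, specifically (\ref{MorseCriticalValue}), then forces $val_{\phi_t}(\tau_t(\mu_\phi))=val_{\phi_t}(\tau_t(\mu'_\phi))$, which translates to $P_{E(\zeta_t)}(c_t)=P_{E(\zeta_t)}(c'_t)$.

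The main obstacle is the identification $\tau_t(\mu_\phi)(y)=\hat\zeta_t(y)+c_t\,y^{\tan\theta_E}$: one must verify that extracting the coefficient of $y^{\tan\theta_E}$ ``relative to $\zeta_t$'' from $\tau_t(\mu_\phi)$ yields a genuine critical point of $P_{E(\zeta_t)}$, not a shifted or conjugated variant. This rests on $\tau_t(\mu_\phi)$ and $\zeta_t$ supporting the same bar $B_t$ (a consequence of Corollary \ref{CorFunLem}) combined with the Newton-polygon identity of the Fundamental Lemma, which forces the Tschirnhausen translations in its proof to be compatible with centering at $\zeta_t$ rather than at the critical class $\mu_\phi$.
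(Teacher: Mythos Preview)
Your argument is correct, and it rests on the same ingredients as the paper's proof: the translation identity (\ref{PB}), Theorem \ref{KLKP}, Corollary \ref{CorFunLem}, and the Fundamental Lemma. The organization, however, is different. You work edge by edge at the root $\zeta$: for each critical point $c$ of $P_{E(\zeta)}$ you pass via Theorem \ref{KLKP} to a critical class $\mu_\phi$, invoke the critical-class deformation $\tau_t(\mu_\phi)$ from the Fundamental Lemma, and then extract $c_t$ as the $\bar B_t$-coordinate of $\tau_t(\mu_\phi)$ (shifted by $a_t$). This is essentially a replay, centered at $\zeta$, of the argument the paper already carried out inside the Fundamental Lemma to prove that $\{P_{E_i(\tau_t(\gamma_\phi))}\}$ is Morse stable.

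The paper's route is shorter: it picks a \emph{single} auxiliary critical class $\gamma_\phi$ lying on the \emph{top} edge of $\PP(\phi,\zeta)$ (which exists because $P_{top}$ vanishes at $0$ and is not a monomial), and then observes that for $i\leq l-2$ one has the literal equality $E_i(\zeta_t)\equiv E_i(\tau_t(\gamma_\phi))$, so $P_{E_i(\zeta_t)}=P_{E_i(\tau_t(\gamma_\phi))}$. Morse stability of the latter family is the \emph{conclusion} of the Fundamental Lemma, invoked as a black box. The top edge is handled by the translation $z\mapsto z+c_t-a_t$, and bars above $\tan\theta_{top}$ give monomials. What the paper buys is brevity and the avoidance of the coordinate-extraction step you flag as the ``main obstacle''; what your approach buys is that it never needs to introduce the auxiliary $\gamma_\phi$ or compare two Newton polygons, at the cost of redoing work already packaged in the Fundamental Lemma. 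One small notational point: in your formula $\hat\zeta(y)+c\,y^{\tan\theta_E}$, the symbol $\hat\zeta$ (truncation at orders $\leq\tan\theta_E$) differs from $\zeta_B$ (truncation at orders $<h(B)$) by the term $a\,y^{h(B)}$, so the $\bar B$-coordinate of $\mu_\phi$ is $a+c$, not $c$; this is harmless after the translation reduction but worth keeping straight.
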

\begin{proof}In $\PP(\phi,\zeta)$, the left vertex of $E_{top}$ is 
$(m_l,q_l)$,
$m_l\geq 1$, where $m_l$ is the multiplicity of $\zeta$ (as a root
of $\phi$). Therefore $P_{top}(0)=0$, $\deg P_{top}(z)\geq 2$,
$P_{top}(z)$ is \textit{not} a monomial.

\s

Hence there exists $c$, $P_{top}'(c)=0\ne P_{top}(c)$. By Theorem
\ref{KLKP}, there exists $\gamma_\phi \in \textit{C}(val_{\phi})$,
whose $\bar{B}$-coordinate is $c$, $B\!:=\iota^{-1}(E_{top})$.
Then $c$ has a deformation $c_t$, $c_0=c$,
$$\tau_t(\gamma_\phi)(y)=\zeta_B(y)+c_ty^{h(B)},\quad \PP(\phi,
\gamma_\phi)=\PP(\phi_t,\tau_t(\gamma_\phi)).$$

\s

Let us compare the edge $E_i(\zeta_t)$ of $\PP(\phi_t,\zeta_t)$
with $E_i(\tau_t(\gamma_\phi))$ of
$\PP(\phi_t,\tau_t(\gamma_\phi))$.

\s

If $i\leq l-2$, then we clearly have $E_i(\zeta_t)\equiv
E_i(\tau_t(\gamma_\phi))$. Hence $P_{E_i(\zeta_t)}(z)=
P_{E_i(\tau_t(\gamma_\phi))}(z)$.

By the Fundamental Lemma, $\{P_{E_i(\tau_t(\gamma_\phi))}(z)\}$ is
Morse stable. Hence if $\bar{B}'\in Supp(\zeta)$ and
$h(B')<\tan\theta_{top}$, then $\{P_{B_t'}(z)\}$ is Morse stable.

\s

As for the top edges $E_{top}(\zeta_t)$,
$E_{top}(\tau_t(\gamma_\phi))$, their associated polynomials
differ merely by a translation $z\mt z+c_t-a_t$, where $a_t$ is
the $\bar{B}$-coordinate of $\zeta_t$. The stability of the latter
implies that of the former.

\s

Finally, if $h(B'')>\tan\theta_{top}$, then $P_{B''_t}(z)$ is a
monomial, hence Morse stable.\end{proof}

\s

Take $B\in Bsp(\M_{1,\phi})$, and deformation $B_t$. Recall that
$\bar{B}=\bar{B}_t=\C$. Applying the classical Morse Stability
Theorem (\S\ref{ClassicalMorse}) to $\{P_{B_t}\}$, we have
homeomorphisms $D_t$, $d_t$ such that

\[
\begin{array}{cccc}

\bar{B} & \stackrel{P_B} {\4\longrightarrow} & \C

\\

\Big\downarrow\vcenter{\rlap{$D_t$}} & & \Big\downarrow\vcenter{%
\rlap{$d_t$}}

\\

\bar{B}_t & \stackrel{P_{B_t}}{\4\longrightarrow}&\C

\end{array}
\]
is commutative, where $D_t$ preserves the critical points and
zeros ($d_t(0)=0$).

Given $\alpha_\phi\in B$, with $\bar{B}$-coordinate $a$. Take
$\alpha_{\phi,t}\in B_t$ whose $\bar{B}_t$-coordinate is
$D_t(a)$:$$\alpha_\phi(y)=\zeta_B(y)+ay^{h(B)},\;
\alpha_{\phi,t}(y)=\zeta_{B_t}(y)+D_t(a)y^{h(B)}.$$

Thus, we have a homeomorphism:
$$\mcd_\Phi:\M_{1,\phi}\times I_\F\rightarrow \M_1\times _\Phi
I_\F,\quad (\alpha_\phi,t)\mt (\alpha_{\phi,t},t).$$

\s

Next, take $(u,L)\in \mcv$, $B\in Bsp(\M_{1,\phi})$,
$L=L(B)\,(<\infty)$. We \textit{define} $$\mcd_\mcv((u,L),B,t))
\!:=((d_t(u),L),B_t,t).$$If $h(B)=\infty$, then $B=\{\zeta\}$,
$\zeta\in Z(\phi)$. We \textit{define}
$$\mcd_\mcv(0_\mcv,\{\zeta\},t)\!:=(0_\mcv,\{\zeta_t\},t).$$

We then have $\mcd_\mcv\circ Val_\phi=Val_\Phi\circ \mcd_\Phi$.

\m

It remains to show that $\mcd_\Phi$, $\mcd_\mcv$ preserve the
structures.

\s

Take $\mu_\phi$. Take $\zeta\in Z(\phi)$ such that
$h(\mu_\phi)=O(\mu_\phi-\zeta)$. Then
$$h(\mu_\phi)=O(\mu_\phi-\zeta)=O(\mu_{\phi,t}-\zeta_t)=h(\mu_{\phi,t}).$$

Next, $\chi_{puis}(\mu_\phi)$ and $\chi_{puis}(\mu_{\phi,t})$ can
be expressed in terms of the co-slopes of the edges of
$\PP(\phi,\zeta)=\PP(\phi_t,\zeta_t)$. Hence
$\chi_{puis}(\mu_\phi)=\chi_{puis}(\mu_{\phi,t})$.

\s

As for the contact order, first suppose $\mu_\phi\sim_{bar}
\nu_\phi$. Then there exists $\zeta\in Z(\phi)$,
$$h(\mu_\phi)=h(\nu_\phi)=O(\mu_\phi-\zeta)=O(\nu_\phi-\zeta),$$which remain
valid when the parameter $t$
is added. Hence
$\CTO(\mu_\phi,\nu_\phi)=\CTO(\mu_{\phi,t},\nu_{\phi,t})$.

Now suppose $B(\mu_\phi)\ne B(\nu_\phi)$. Take $\zeta\perp
\bar{B}(\mu_\phi)$, $\zeta'\perp\bar{B}(\nu_\phi)$. Then
$$\CTO(\mu_\phi,\nu_\phi)=O(\zeta-\zeta')=O(\zeta_t-\zeta'_t)=
\CTO(\mu_{\phi,t},
\nu_{\phi,t}).$$

\section{The Trivialization Vector Field}
\label{trivializationvectorfield}

When a coordinate system $(z_1,...,z_n)$ of $\C^n$ is chosen, we
use $\{\frac{\pt}{\pt z_1},..., \frac{\pt}{\pt z_n}\}$ to denote
the standard orthonormal basis, with hermitian product
$$<\sum a_i\frac{\pt}{\pt z_i},\sum b_i\frac{\pt }{\pt z_i}>=\sum
a_i\bar{b}_i\quad (\bar{b}_i\;\text{the complex conjugate
of}\;b_i).$$

For a holomorphic function $h(z_1,...,z_n)$, the \textit{gradient}
of $h$ (\cite{milnor}, p.33) is
$$Grad\,h\!:=\sum \overline{\frac{\pt h}{\pt z_i}}\frac{\pt}{\pt
z_i}.$$

Let $f(x,y)$, $F(x,y,t)$, $\phi_t$, $\Phi$, be as in
\S\ref{GoeThms}. To prove the Equi-singular Deformation Theorem,
we use a vector field $\vec{\CF}(x,y,t)$ which is defined in two
steps, following
 Ehresmann's idea (\cite{milnor1}), where $(x,y,t)\in
\mathcal{N}$, $\mathcal{N}$ \textit{a sufficiently small
neighborhood of} $\{0\}\times I_\C$ in $\C^2\times I_\C$.

\m

\textit{Step} \textit{One}. Take $\gamma_\phi\in
\textit{C}(val_{\phi})$, with deformation
$\gamma(y,t)\!:=\tau_t(\gamma_{\phi})(y)$ as in
(\ref{MorseStableDefi}). \textit{In this step we assume}
$\gamma(y,t)$ \textit{is a holomorphic function in} $(y,t)$,
$\gamma(0,t)\equiv 0$.

The curve-germ defined by $x=\gamma(y,t)$ is \textit{smooth},
$t\in I_\C$.

We define $\vec{\CF}_{\gamma}(x,y,t)$ as follows. The coordinate
transformation
$$\mcd:(x,y,t)\mt (x_\gamma,y_\gamma,t_\gamma)\!:=(x-\gamma(y,t),y,t)$$ is
holomorphic, $\mcd^{-1}$ transforms $F(x,y,t)$ to
\begin{equation}\label{Fgamma}F^{(\gamma)}(x_\gamma,y_\gamma,t_\gamma)\!:=
F(x_\gamma+\gamma(y_\gamma,t_\gamma),y_\gamma,t_\gamma).\end{equation}

\begin{conv}\label{Fgamma}We shall use ${F}_{x_{\gamma}}$,
${F}_{y_{\gamma}}$, $F_{t_{\gamma}}$ to denote the partial
derivatives of $F^{(\gamma)}$. The notations are simpler, but
cause no confusion.\end{conv}

Now, consider the vector field
$$\vec{V}(x_{\gamma},y_{\gamma},t_{\gamma})\!:=-A(x_{\gamma},y_{\gamma},
t_{\gamma})
x_{\gamma} \frac{\partial}{\partial
x_{\gamma}}-B(x_{\gamma},y_{\gamma},t_{\gamma})y_{\gamma}
\frac{\partial}{\partial
y_{\gamma}}+\frac{\partial}{\partial t_{\gamma}},$$where
\begin{equation}\label{AB}A\!:=\frac{\bar{x}_{\gamma}\bar{F}_{x_{\gamma}}
F_{t_{\gamma}}}
{|x_{\gamma}F_{x_{\gamma}}|^2+ |y_{\gamma} F_{y_{\gamma}}|^2},
\quad B\!:=\frac{\bar{y}_{\gamma}\,\bar{F}_{y_{\gamma}}
F_{t_{\gamma}}}{|x_{\gamma} F_{x_{\gamma}}|^2+ |y_{\gamma}
F_{y_{\gamma}}|^2},
\end{equation}
and, by the Chain Rule,
\begin{equation}\label{ChainRule}\frac{\pt}{\pt
x_{\gamma}}=\frac{\pt}{\pt x},\quad \frac{\pt}{\pt
y_{\gamma}}=\frac{\pt}{\pt y}+\frac{\pt \gamma}{\pt
y}\frac{\pt}{\pt x},\quad \frac{\pt}{\pt
t_{\gamma}}=\frac{\pt}{\pt t}+\frac{\pt \gamma}{\pt
t}\frac{\pt}{\pt x}.\end{equation}

The coefficients $A$, $B$ are chosen so that $<\!\vec{V},Grad\,
F\!>=0$. Hence $\vec{V}$ is \textit{tangent} to the level surfaces
$F=const$. (Here $Grad\,F\!:=\bar{F}_{x_{\gamma}}\frac{\pt}{\pt
x_{\gamma}}+\bar{F}_{y_{\gamma}}\frac{\pt}{\pt
y_{\gamma}}+\bar{F}_{t_{\gamma}}\frac{\pt}{\pt t_{\gamma}}$.)

We have not defined $\vec{V}$ when
$x_{\gamma}F_{x_{\gamma}}=y_{\gamma}F_{y_{\gamma}}=0$.
\textit{This we shall do in} \S \ref{proofEquiSingThm}.

Using (\ref{ChainRule}) we can express $\vec{V}$ as a vector field
in the $(x,y,t)$-space:
\begin{equation}\label{vecFsubg}\vec{\CF}_{\gamma}(x,y,t)\!:=(d\mcd)^{-1}
(\vec{V}),\end{equation}which
is tangent to $F=const$. Each trajectory (integral curve) lies on
a single level surface.

\m

When $x_{\gamma}=0$, the $\partial/{\partial x_{\gamma}}$
component of $\vec{V}$ vanishes, hence the flow generated by
$\vec{V}$ carries the $y_{\gamma}$-axis to itself (but not
necessarily point-wise fixed). \textit{The flow generated by}
$\vec{\CF}_{\gamma}$, \textit{in the} $(x,y,t)$-\textit{space},
\textit{carries the curve-germ} $\pi_*(\gamma(y,0))$ 
\textit{to}
$\pi_*(\gamma(y,t))$ at time $t$.

\m

\textit{Step Two.} \textit{We are to define} 
$\vec{\CF}(x,y,t)$.
Take $\gamma_{j,\phi}\in \textit{C}(val_{\phi})$ in
(\ref{listingC}). Take the deformation $\tau_t(\gamma_{j,\phi})$
in (\ref{MorseStableDefi}). Write the canonical coordinate simply
as
\begin{equation}\label{gammat}\gamma_j(y,t)\!:=\tau_t(\gamma_{j,\phi})(y),
\quad 1\leq j\leq p.\end{equation}

Take an integer $N$ divisible by every
$m_{puis}(\gamma_{j,\phi})$, for instance,
$N\!:=\prod_jm_{puis}(\gamma_{j,\phi})$.

Consider the substitution map
\begin{equation}\label{substitutionmap}S_{XY}:(X,Y,T)\mt (x,y,t)\!:=(X,Y^N,T),
\end{equation}
and the coordinate
transformation
$$X_j\!:=X-\Gamma_j(Y,T),\quad Y_j\!:=Y,\quad T_j\!:=T,$$
where
$\Gamma_j(Y,T)\!:=\gamma_j(Y^N,T)$ is holomorphic. Like
(\ref{Fgamma}), we write
$$F^{(j)}(X_j,Y_j,T_j)\!:=F(X_j+\Gamma_j,Y_j^N,T_j),\quad
1\leq j\leq p.$$

Of course $X_j=0$ is \textit{smooth}, and is mapped by $S_{XY}$ to
$\pi_*(\gamma_{j,\phi})$, $1\leq j\leq p$. (The latter may not be
mutually distinct:\;if $\gamma_{1,\phi}(y)$, $\gamma_{2,\phi}(y)$
are conjugates, then
$\pi_*(\gamma_{1,\phi})=\pi_*(\gamma_{2,\phi})$.)

Therefore, for each $j$, $\vec{\CF}_{\Gamma_j}(X,Y,T)$ is defined
as in (\ref{vecFsubg}), being the vector field
$$\vec{V}_j(X_j,Y_j,T_j)\!:=-A_j(X_j,Y_j,T_j)X_j\frac{\pt}{\pt
X_j}-B_j(X_j,Y_j,T_j)Y_j \frac{\pt}{\pt Y_j}+\frac{\pt}{\pt
T_j}$$expressed in terms of $(X,Y,T)$, where, as in (\ref{AB}),
with Convention \ref{Fgamma},$$A_j
:=\frac{\bar{X}_j\bar{F}_{X_j}F_{T_j}} {|X_jF_{X_j}|^2+ |Y_j
F_{Y_j}|^2}, \quad B_j:=\frac{\bar{Y}_j\bar{F}_{Y_j}F_{T_j}}
{|X_jF_{X_j}|^2+ |Y_j F_{Y_j}|^2}.$$

\m

Let us write $\hat{X}_k\!:=X_1\cdots X_{k-1}\cdot
 X_{k+1}\cdots X_p$, and, for $(X,Y)\ne (0,0)$, define
\begin{equation}\MP_k \!:=
\frac{|\hat{X}_k|^2}{|\hat{X}_1|^2+\cdots +|\hat{X}_{p}|^2},\qquad
1\leq k\leq p.\end{equation}We call $\{\MP_k\}$ a
\textit{partition of unity}, for we have
\begin{equation}\label{partionP}\sum \MP_k=1,\quad \MP_k=1\;
\text{when} \;X_k=0,\quad \MP_k=0\; \text{when} \;X_j=0,\; j\ne
k.\end{equation}

The $\MP_k$'s are \textit{real analytic} at every $(X,Y) \ne
(0,0)$ in the sense of Convention\,\ref{conv1}.

Now we use the $\MP_k$'s to \textit{``patch up"} the vectors
$\vec{\CF}_{\Gamma_j}$:
\begin{equation}\label{vectorv}\vec{v}(X,Y,T)\!:=
\MP_1\vec{V}_1+\cdots +\MP_p\vec{V}_p\quad(\text{all expressed in
$X,Y,T$}),\end{equation}and, using the differential $d S_{XY}$ of
the substitution map $S_{XY}$, define
\begin{equation}\label{vectorF}\vec{\CF}(x,y,t)\!:=dS_{XY}(\vec{v}).
\end{equation}

We must show $\vec{\CF}$ is well-defined, since $S_{XY}$ is a
many-to-one mapping.

Let $\theta\!:=e^{{2\pi \ii}/N}$. A conjugation $y^{1/N}\mt
\theta^j y^{1/N}$ permutes the $\gamma_k$'s, the $X_k$'s and the
$\MP_k$'s. Hence $\vec{v}(X,Y,T)$ is 
\textit{invariant under these
conjugations}. It follows that $\vec{\CF}$ is well-defined.

\textit{We shall show, in \S 
\ref{proofEquiSingThm}},
that
$\vec{\CF}$ can be extended continuously throughout $\mathcal{N}$,
so that $\vec{\CF}(x,y,t)$ is well-defined in $\mathcal{N}$,
tangent to $F=const$. Because of (\ref{partionP}), \textit{the
flow generated by $\vec{\CF}$ carries the curve-germ
$\pi_*(\gamma_j(y,0))$ to $\pi_*(\gamma_j(y,t))$} \textit{at time}
$t$, $1\leq j\leq p$.

\begin{attention}\label{attentionPham} \textit{It is important to
point out
what the above does not say}.

Take $\gamma_{j,\phi}$. Of course there exists
$\gamma_j(y)\!:=\gamma_{j,\phi}(y)+\cdots$ such that
$F_x(\gamma_j(y),y,0)=0$. Hence $F_x(x,y,0)=0$ on the curve-germ
$\Delta\!:=\pi_*(\gamma_j(y))$; $\Delta$ is called a ``polar" of
$F(x,y,0)$.

Note that
$\mathcal{C}_{ord}(\Delta,\pi_*(\gamma_{j,\phi}))>O_y(\gamma_{j,\phi})$,
and, in general, $\Delta\ne \pi_*(\gamma_{j,\phi})$. Following the
flow, $\Delta$ reaches $\Delta_t$ at time $t$. \textit{The above
does not say $\Delta_t$ is necessarily a polar of
$F(x,y,t)$}.

For example, the Pham family $P(x,y,t)$ in (\ref{Phamfamily}) has
only \textit{one} polar when $t=0$, but \textit{two} polars when
$t\ne 0$. We have no idea whether the polar at $t=0$ will flow to
one of the two polars, or more likely to neither. In the
\textit{blurred space} $\C_{*/P_t}$, however, there is a
\textit{unique critical point} for each $t\in I_\C$; they
constitute a single orbit of $\{\eta_t\}$.

A critical point is an equivalence class in $\C_*$ containing at
least one polar.  \textit{We see the critical point at all
time
$t$, but cannot keep track of the polars.}
(This is like the
Arakawa in Japan, a river which flows by Saitama University. We
see the river bed, but cannot predict the position of the flow,
whence, literally, the name ``Arakawa"--Wild River.)
\end{attention}

\section{Proof of the Equi-singular Deformation
Theorem}\label{proofEquiSingThm}

Recall that $F$ is mini-regular in $x$. In a sector $|y|\leq
\epsilon|x|$, the behavior of $F$ is dominated by $x^m$,
$m\!:=O(F)$. Hence there is nothing to worry about in this sector.

We shall henceforth restrict our attention to a sector $|x|<
K|y|$, $K$ sufficiently large.

\begin{notation}In this section we write $g\lesssim h$ if $g\leq
Ch$, $C>0$ a constant; $g\approx h$ means $g\lesssim h\lesssim g$;
and $g\ll h$ means $g/h\rightarrow 0$.\end{notation}

Next we show how $\vec{v}$ in (\ref{vectorv}) and $\vec{\CF}$ in
(\ref{vectorF}) generate homeomorphisms.

\s

 The following is a parameterized version of the Proposition in
\cite{laurent}, p.347.

\begin{lem}\label{lemlesssim}For
$F^{(j)}(X_j,Y_j,T_j)\!:=F(X_j+\Gamma_j(Y_j,T_j),Y^N_j,T_j)$, we
have
\begin{equation}\label{lesssim}|F_{T_j}|\lesssim
|X_jF_{X_j}|+|Y_jF_{Y_j}|,\quad (X_j,Y_j,T_j)\in
S_{XY}^{-1}(\mathcal{N}),
\end{equation}where $
1\leq j\leq p$, $\mathcal{N}$ as in
\S\ref{trivializationvectorfield}. (We use
Convention\,(\ref{Fgamma}): $F_{X_j}\!:=F^{(j)}_{X_j}$, $etc.$.)

We can extend $A_j$, $B_j$ real analytically to
$S_{XY}^{-1}(\mathcal{N})- \{0\}\times I_\C$, where they are
bounded.

Define $\vec{\CF}(0,0,t)\!:=\frac{\pt}{\pt t}$. Then $\vec{\CF}$
is continuous on $\mathcal{N}$,
\begin{equation}\label{w-equation}\|\vec{\CF}(x,y,t)
-\partial/{\partial t}\|\lesssim|x|+|y|,\quad (x,y,t)\in
\mathcal{N},\end{equation}and $\vec{\CF}$ is real analytic in
$\mathcal{N}-\{0\}\times I_\C$.

It follows that a trajectory of $\vec{\CF}$, with initial point
outside $I_\C$, will never reach $I_\C$.

The flow generated by $\vec{\CF}$ carries $\pi_*(\gamma_{j,\phi})$
to $\pi_*(\tau_t(\gamma_{j,\phi}))$, $1\leq j\leq p$.
\end{lem}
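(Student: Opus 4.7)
The plan is to prove the five assertions in order, with the Lojasiewicz-type estimate (\ref{lesssim}) serving as the cornerstone from which all other items cascade.

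\textbf{Establishing} (\ref{lesssim}). After the substitution $X_j=X-\Gamma_j(Y_j,T_j)$, the critical curve $x=\gamma_j(y,t)$ is straightened to $\{X_j=0\}$, and one expands $F^{(j)}$ via the $\M_1$-Taylor series of the Cauchy Theorem. The coefficients are controlled by the Newton polygon $\PP(\phi_t,\tau_t(\gamma_{j,\phi}))$, which by the Fundamental Lemma and Corollary~\ref{CorFunLem} is \emph{independent} of $t$. This uniformity is exactly what upgrades the non-parametric Lojasiewicz inequality of \cite{laurent}, p.347, to a uniform-in-$t$ estimate (\ref{lesssim}) throughout $S_{XY}^{-1}(\mathcal{N})$. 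This is the main obstacle: the implicit constants must be chosen uniformly along the deformation, which relies on the full strength of the Morse stability hypothesis to lock down the critical-point structure.

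\textbf{Bounds on} $A_j,B_j$ \textbf{and continuity of} $\vec{\CF}$. Writing $D_j:=|X_jF_{X_j}|^2+|Y_jF_{Y_j}|^2$ and applying $2ab\le a^2+b^2$,
\[
|A_jX_j|=\frac{|X_j|^2|F_{X_j}||F_{T_j}|}{D_j}\lesssim|X_j|\cdot\frac{|X_jF_{X_j}|\,(|X_jF_{X_j}|+|Y_jF_{Y_j}|)}{D_j}\lesssim|X_j|,
\]
and similarly $|B_jY_j|\lesssim|Y_j|$. Since $\Gamma_j(Y,T)=O(Y^N)$, we have $|X_j|\lesssim|X|+|Y|^N=|x|+|y|$, while the chain rule (\ref{ChainRule}) contributes only factors $\partial\Gamma_j/\partial Y=O(Y^{N-1})$ and $\partial\Gamma_j/\partial T=O(Y^N)$. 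Thus in the $(X,Y,T)$-frame, $\vec{V}_j-\partial/\partial T$ has $\partial/\partial X$-component of size $\lesssim |x|+|y|$ and $\partial/\partial Y$-component of size $\lesssim|Y|=|y|^{1/N}$. After forming $\vec{v}=\sum\MP_k\vec{V}_k$ (using $\sum\MP_k=1$) and pushing forward by $dS_{XY}$, which multiplies the $\partial/\partial Y$-direction by $NY^{N-1}$, the $y$-component becomes $\lesssim|y|^{1/N}\cdot|y|^{(N-1)/N}=|y|$. Hence $\|\vec{\CF}-\partial/\partial t\|\lesssim|x|+|y|$, and declaring $\vec{\CF}(0,0,t):=\partial/\partial t$ produces a continuous extension throughout $\mathcal{N}$, yielding (\ref{w-equation}).

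\textbf{Analyticity, Gronwall, and flow invariance.} Off $\{0\}\times I_\C$ the denominators $D_j$ are strictly positive (they dominate $|F_{T_j}|$ via (\ref{lesssim}), and vanish only where $F$ is critical, which in $(X_j,Y_j,T_j)$-coordinates is compatible with the smooth straightening); combined with the $S_{XY}$-invariance of $\vec{v}$ established in \S\ref{trivializationvectorfield}, this makes $\vec{\CF}$ a real-analytic vector field off the $t$-axis. The trajectory-trapping statement then follows by Gronwall applied to $|x|^2+|y|^2$ via (\ref{w-equation}). Finally, on $\{X_k=0\}$, (\ref{partionP}) gives $\MP_k=1$ and $\MP_j=0$ for $j\ne k$, so $\vec{v}=\vec{V}_k$ there; the $\partial/\partial X_k$-component $-A_kX_k$ of $\vec{V}_k$ vanishes on $\{X_k=0\}$, so $\vec{V}_k$ is tangent to this surface. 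Its image under $S_{XY}$ is the deforming curve-germ $\pi_*(\tau_t(\gamma_{k,\phi}))$, delivering the flow property.
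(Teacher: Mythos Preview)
Your proposal has two genuine gaps.

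\textbf{The estimate (\ref{lesssim}).} You do not actually prove it; you only say that the Newton polygon $\PP(\phi_t,\tau_t(\gamma_{j,\phi}))$ is independent of $t$ and that this ``upgrades'' the non-parametric inequality of \cite{laurent}. But constancy of the polygon at $\gamma_{j,\phi}$ alone does not yield (\ref{lesssim}) at an arbitrary point of $S_{XY}^{-1}(\mathcal{N})$. The paper's argument is substantially more work: one invokes the Curve Selection Lemma to reduce to an analytic arc $\rho$, then singles out the \emph{smallest} horn neighbourhood $H_{d_{12}}(\Gamma_1)$ containing $Im(\rho)$, works in the coordinates $(X_1,Y_1,T_1)$, and uses Theorem~\ref{KLKP} to see that the associated polynomial satisfies $P_E'(u_\rho)\ne 0$ along the relevant edge $E$. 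Euler's relation on the weighted form $W$ then gives (\ref{LojAlongRho}), and this proves (\ref{lesssim}) for $j=1$. The case $j\ge 2$ is handled separately via the chain-rule identities (\ref{related}) and a comparison of $h$ with $O(\delta_j)$. None of this machinery appears in your sketch.

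\textbf{Real analyticity of $A_j,B_j$.} Your claim that ``off $\{0\}\times I_\C$ the denominators $D_j$ are strictly positive'' is false in general. The zero set of $D_j=|X_jF_{X_j}|^2+|Y_jF_{Y_j}|^2$ is computed in the paper (Lemma~\ref{denominatorofAB}) to be $\bigcup\{X_k=0\mid X_k^2\ \text{divides}\ F\}$, which is nontrivial whenever $\phi$ has a multiple root; this situation is explicitly allowed (cf.\ the multiplicities $m_i$ in $\hat{f}_{root}$). Boundedness of $A_j,B_j$ (which you do obtain) is not by itself enough for real-analytic extension: one must show that along each component $\{X_k=0\}$ of the pole set, the numerator $\bar X_j\bar F_{X_j}F_{T_j}$ (resp.\ $\bar Y_j\bar F_{Y_j}F_{T_j}$) vanishes to at least the same order as $D_j$. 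The paper does this by arguing that if $X_k^n$ is the highest power dividing both $X_jF_{X_j}$ and $Y_jF_{Y_j}$, then boundedness forces $X_k^n\mid F_{T_j}$ as well, whence $A_j,B_j$ extend real analytically. Your argument skips this entirely.

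The remaining parts (the $|A_jX_j|\lesssim|X_j|$ bound, the push-forward under $dS_{XY}$, the Gronwall-type trajectory argument, and the tangency of $\vec V_k$ to $\{X_k=0\}$ via (\ref{partionP})) are fine and match the paper.
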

\begin{proof}We use the Curve Selection Lemma to prove
(\ref{lesssim}). Let $\rho(s)$ be a given analytic arc. It
suffices to show that (\ref{lesssim}) holds along $\rho$. We can
assume $\rho(0)=0$.

Take $\varepsilon'>0$, sufficiently small. For each pair
$\Gamma_k, \Gamma_s$ (in \S\ref{trivializationvectorfield}), let $
d_{ks}\!:=O_Y(\Gamma_k-\Gamma_s)$.

Define a \textit{horn} \textit{neighborhood} of $\Gamma_k$ of
order $d_{ks}$ by
\begin{equation}\label{varepsilonprime}H_{d_{ks}}(\Gamma_k)\!:=\{(X,Y,T)\,|\,
|X-\Gamma_k(Y,T)|<
\varepsilon'|Y|^{d_{ks}}\}.\end{equation}This
 is a sub-analytic set, hence either $Im(\rho)-\{0\}\subset
H_{d_{ks}}(\Gamma_k)$ or $Im(\rho)\cap
H_{d_{ks}}(\Gamma_k)=\emptyset$.

\s

Let us first consider the case where $Im(\rho)-\{0\}$ is contained
in at least one of the horn neighborhoods. In this case, by
permuting the indices, if necessary, we can assume
$H_{d_{12}}(\Gamma_1)$ is the \textit{smallest} horn neighborhood
containing $Im(\rho)-\{0\}$.

\s

We then work in the coordinate system $(X_1,Y_1,T_1)$, writing
$\rho(s)=(X_1(s), Y_1(s),T_1(s))$.

\s

Let $\rho_\pi(x)\!:=(X_1(s),Y_1(s))$. We show (\ref{lesssim})
holds on the surface $Im(\rho_\pi)\times I_\C$.

Let us write
\begin{equation}\label{Fsub0}
F^{(1)}(X_1,Y_1,T_1)\!:=F_0^{(1)}(X_1,Y_1)+P(X_1,Y_1,T_1),\quad
 P(X_1,Y_1,0)\equiv 0.\end{equation}

By the Fundamental Lemma, $\PP(F^{(1)},0)=\PP(F^{(1)}_0,0)$, all
dots of $P(X_1,Y_1,T_1)$ lie on or above this polygon. (Newton
Polygon at $0$ is Newton Polygon in the usual sense.)

\begin{conv}Consider the vertex $V_l=(m_l,q_l)$. Suppose $m_l=0$ and
$h>\tan\theta_{top}$. In this section, we call the pair
$E(h)\!:=(V_l,h)$ \textit{also a vertex edge}, with co-slope $h$.
\end{conv}

\textit{First, suppose $Im(\rho)$ is not contained in
$X_1=0$}. We
write the coordinate of $\rho_{\pi,\C}$ as
\begin{equation}\label{rhoexpression}\rho_\pi(Y_1)\!:=rY_1^h+\cdots,
\quad r\ne 0,\quad d_{12}\leq h<\infty.\end{equation}And let
$E\!:=E(h)$ denote the \textit{unique} (possibly vertex) edge with
co-slope $h$.

\s

We first prove $(\ref{lesssim})$ for $j=1$.

\s

Assume $E\!:=E(h)$ is a proper edge ($i.e$., not a vertex edge).

In the first place we must have $P_E'(r)\ne 0$. For if
$P_E'(r)=0$, then, by Theorem \ref{KLKP}, there would exist
$\Gamma_j$, $j\geq 2$, of the form $\Gamma_j=rY_1^h+\cdots$. Then
$H_{d_{12}}(\Gamma_1)$ would not be the smallest horn
neighbourhood containing $Im(\rho)$, a contradiction.

\s

Now we collect the monomial terms of $F^{(1)}$ along $E$:
\begin{equation}\label{weightedForm}W(X_1,Y_1,T_1)\!:=
\sum a_{pq}(t)X_1^pY_1^q,
\quad (p,q)\in
E,\end{equation}which is a weighted form, $W(z,1,t)=P_E(z)$.

\s

Take $u\ne 0$. Let $t$ be fixed. By Euler's Theorem, $X_1-uY_1^h$
is a common factor of $X_1W_{X_1}$ and $Y_1W_{Y_1}$ \textit{iff}
$(X_1-uY_1^h)^2$ divides $W(X_1,Y_1,T_1)$, $i.e.$,
$P_E(u)=P_E'(u)=0$.

\m

Hence, if $(P_E(r),P_E'(r))\ne (0,0)$, then, along $\rho_\pi$,
$$O_{Y_1}(|X_1W_{X_1}|+|Y_1W_{Y_1}|)
=O_{Y_1}(|X_1F_{X_1}|+|Y_1F_{Y_1}|)=q_{_E}+m_{_E}h,$$where
$(m_{_E},q_{_E})$ is any dot on $E$. It follows that
\begin{equation}\label{LojAlongRho}L_{|X_1F_{X_1}|+|Y_1F_{Y_1}|}(\rho_\pi)=
q_{_E}+m_{_E}h.\end{equation}

All dots of $F_{T_1}$ lie on or above the line $\mathcal{L}(E)$,
$O_{Y_1}(F_{T_1})\geq q_{_E}+m_{_E}h$, proving (\ref{lesssim}).

\m

If $E=(V_k,h)$ is a vertex edge, then $|Y_1F_{Y_1}|\approx
|r|^{m_k}|Y_1|^{q_k+m_kh}\gtrsim |F_{T_1}|$. Again (\ref{lesssim})
holds.

\m

Next we prove (\ref{lesssim}) for the case $j\geq 2$. The
coordinate systems are related by
\begin{equation}\label{coordinaterelated}X_j=X_1-\delta_j(Y_1,T_1),
\quad Y_j=Y_1, \quad
T_j=T_1,\end{equation}where $\delta_j\!:=\Gamma_j-\Gamma_1$
($\delta_1\equiv 0$). Let us write
\begin{equation}\label{deltacoeff} \delta_j(Y_1,T_1)\!:=
c_j(T_1)Y_1^{O(\delta_j)}+\cdots,\quad c_j(0)\ne 0,\quad 2\leq
j\leq p.\end{equation}

Note that $$|Y_1\frac{\pt \delta_j}{\pt Y_1}|\approx|\delta_j|,
\quad |\frac{\pt \delta_j}{\pt T_1}|\lesssim|\delta_j|,$$ and, by
the Chain Rule,
\begin{equation}\label{related}X_jF_{X_j}=(X_1-\delta_j)F_{X_1},\; Y_jF_{Y_j}
=Y_1F_{Y_1}+Y_1
\frac{\pt \delta_j}{\pt Y_1}F_{X_1},\;F_{T_j}=F_{T_1}+\frac{\pt
\delta_j}{\pt T_1}F_{X_1}.\end{equation}

Now, if $h\geq O(\delta_j)$, then, along $\rho_\pi$,
$|X_1-\delta_j|\approx|\delta_j|$. Hence (\ref{lesssim}) follows
from
$$|X_jF_{X_j}|+|Y_jF_{Y_j}|\approx|\delta_j F_{X_1}|+|Y_1F_{Y_1}|,\quad
|F_{T_j}|\lesssim|F_{T_1}|+|\delta_j F_{X_1}|,$$and
(\ref{lesssim}) when $j=1$.

\s

Suppose $h<O(\delta_j)$. Then $|X_1-\delta_j|\approx|X_1|$ along
$\rho_\pi$. Again we have (\ref{lesssim}).

\m

\textit{Suppose $Im(\rho)$ is contained in $X_1=0$}. 
In this
case
 $F$ is divisible by $X_1^{m_l}$, $F_{T_1}$ is divisible by
$X_1^{k}$, $k\geq m_l$. Again, we have (\ref{lesssim}).

\m

It remains to consider the case where no horn neighborhood
contains $Im(\rho)-\{0\}$. We again write $\rho_\pi$ as
(\ref{rhoexpression}) where now $h<d_{jk}$ $\forall$ $j,k$. The
same argument proves (\ref{lesssim}).

\m

Next we show how $A_j$, $B_j$ can be extended across
$S_{XY}^{-1}(\mathcal{N})-\{0\}\times I_\C$.
\begin{lem}\label{denominatorofAB} The zero set of the denominator of $A_j$,
$B_j$
is
$$\{(X,Y,T)|X_jF_{X_j}=Y_jF_{Y_j}=0\}=\begin{cases}\bigcup
\{X_k=0\,|\,X_k^2\;\text{divides}\;F\},\\\{0\}\;\;\;\text{if no
such $X_k$ exists}.\end{cases}$$
\end{lem}
\begin{proof}Recall that we work in the sector $|x|<K|y|$, hence $Y\ne 0$.

Suppose $X_j=F_{Y_j}=0$. In $\PP(F^{(j)},\Gamma_j)$, which is
independent of $T_j$, we must have $m_l\geq 2$. Hence $F$ is
divisible by $X_j^2$.

Let $\rho(s)$ be an analytic arc along which $F_{X_j}=F_{Y_j}=0$.
Then $F_X=F_Y=0$ along $\rho$. As before, we choose $X_1$, define
$\rho_\pi$, $F_0(X_1,Y_1)$, $etc.$, as in (\ref{Fsub0}), and then
$$\frac{\pt F^{(1)}_0}{\pt X_1}=\frac{\pt
F^{(1)}_0}{\pt Y_1}=0\quad \text{along} \quad \rho_\pi.$$ Hence
$Im(\rho_{\pi,\C})$ must be $X_1=0$ ($T_1=0$), $X_1^2$ divides
$F_0$, hence also $F$.\end{proof}

\s

We can now complete the proof of Lemma \ref{lemlesssim}.

The real meromorphic functions $A_j$, $B_j$ are \textit{bounded},
by (\ref{lesssim}). Hence, if $n$ is the largest integer such that
$X^n_k$ divides both $X_jF_{X_j}$ and $Y_jF_{Y_j}$, then $X_k^n$
must also divides $F_{T_j}$.

It follows that $A_j$, $B_j$ are \textit{defined and real
analytic} on $S_{XY}^{-1}(\mathcal{N})-\{0\}\times I_\C$.

\m

Hence $\vec{\CF}(x,y,t)$, with $\vec{\CF}(0,0,t)\!:=\frac{\pt}{\pt
t}$, is \textit{continuous} in $\mathcal{N}$, 
\textit{satisfying}
(\ref{w-equation}), and is \textit{real analytic} in
$\mathcal{N}-\{0\}\times I_\C$.

\s

Finally, $\mathcal{P}_kX_k=0$ along every $X_j=0$. Hence
$\vec{\CF}$ carries $\pi_*({\gamma_{j,\phi}})$ to
$\pi_*(\tau_t(\gamma_{i,\phi}))$.
\end{proof}

Now, using a well-known argument (\cite{kuo}), (\ref{w-equation})
implies that $\vec{\CF}$ generates a homeomorphism $H$ having
properties (\ref{homeoh}), (1) and (2) in the Equi-singular
Deformation Theorem.

\m

Next we prove $(3)$. Let $\rho(s)=(x(s),y(s))$ be a given analytic
arc in the $(x,y)$-plane.

Consider $\rho^{(N)}(s)\!:=(x(s^N),y(s^N))$. This arc can be
lifted by $S_{XY}$ to an analytic arc
$$\rho_{_{XY}}(s)\!:=(X(s),Y(s)),\quad S_{XY}\circ
\rho_{_{XY}}=\rho^{(N)},$$where $Y(s)$ is an \textit{integral}
power series in $s$ obtained by solving $Y^N=y(s^N)$. (Of course,
there are $N$ liftings of $\rho^{(N)}$ to the $(X,Y)$-space; in
general, however, we cannot lift $\rho$.)

\s

Let us write $\Gamma_j\!:=\Gamma_j(Y,0)$. By permuting the
indices, if necessary, we can assume
\begin{equation}\label{hsmallest}
\begin{split}h
&\!:=\MC_{ord}(Im(\rho_{_{XY,\C}}),Im(\Gamma_1))=\cdots
=\MC_{ord}(Im(\rho_{_{XY,\C}}),Im(\Gamma_r))\\
&>\MC_{ord}(Im(\rho_{_{XY,\C}}),Im(\Gamma_{r+i})),\quad 0<i\leq
p-r,\end{split}\end{equation}where $1\leq r\leq p$,
$h/N=\CTO(Im(\rho_{_\C}),\pi_*(\gamma_{j,\phi}))$, $1\leq j\leq
r$.

\m

\textit{There are now three cases to consider}: (a)
$h/N\leq
h(\gamma_{1,\phi})$, $h< \infty$, (b)
$h(\gamma_{1,\phi})<h/N<\infty$, and (c) $h=\infty$, that is,
$Im(\rho_{_\C})=\pi_*(\gamma_{1,\phi})$.

\s

Case (c) is easy: $Im(\rho_{_{XY,\C}})$ is the $Y_1$-axis, along
which $\vec{v}=\vec{V}_1$ is analytic, whence $(3)$.

\m

Consider case (a). Let $E\!:=E(h)$ be the unique edge in
$\PP_{ext}(F_0^{(1)},0)$ of co-slope $h$. ( Of course, $E$ can be
a vertex edge $(V_e,h)$. In this case $m_e>0$, $P_E(z)$ is a
monomial.)

\s

As before, we work in the coordinate system $(X_1,Y_1,T_1)$.

Take a coordinate of $Im(\rho_{_{XY,\C}})$,
$$\rho_{_{XY}}(Y_1)\!:=u_\rho Y_1^h+\cdots, \quad u_\rho\ne
0.$$A coordinate of $Im(\rho_{_\C})$ is
$$\rho(y)=\gamma_{1,\phi}(y)+[u_\rho y^{h/N}+\cdots].$$

\s

As before, $P_E'(u_\rho)\ne 0$. (Same argument: otherwise,
(\ref{hsmallest}) would fail.)

\s

\textit{Now, let us first assume that $h$ is an
integer}, so that
the substitution map
$$S_{uv}: (u,v,t)\mt (X_1,Y_1,T_1)\!:=(uv^h,v,t),$$is holomorphic,
where
\begin{equation}\label{chainruleuv}X_1\frac{\partial}{\partial
X_1}=u\frac{\partial}{\partial u},\quad
Y_1\frac{\partial}{\partial Y_1} =-h\, u\frac{\partial}{\partial
u}+v\frac{\partial}{\partial v},\quad \frac{\pt}{\pt
T_1}=\frac{\pt}{\pt t}.\end{equation}

We then define
$$\label{U}\vec{U}(u,v,t)\!:=(dS_{uv})^{-1}(\vec{v}).$$

\begin{question}\label{Qu}At which $(u,v,t)$ is $\vec{U}$ well-defined and 
real
analytic?\end{question}

To answer this, we need a careful analysis of the denominator of
$A_j$, $B_j$:
$$D_j\!:=|X_jF_{X_j}|^2+|Y_j F_{Y_j}|^2,\quad 1\leq j\leq p,$$and also that
of $\mathcal{P}_k$, when the substitution $S_{uv}$ is made.

\begin{lem}\label{denoofAB}Let $L(E)\!:=q_{_E}+m_{_E}h$, $(m_{_E},q_{_E})
\in E$.
Then
\begin{equation}\label{DsukandC(u,t)}D_j(uv^h,v,t)=C_j(u,t)v^{2L(E)}+
\cdots,\quad 1\leq j\leq r,\end{equation} where $C_j(u,t)$ is a
polynomial in $u$, $\bar{u}$, coefficients in $t$.

For $u\ne 0$, $C_j(u,t)=0$ iff $P_E(u)=P_E'(u)=0$. For any $u$,
$P_E'(u)\ne 0$ implies $C_j(u,t)\ne 0$.
\end{lem}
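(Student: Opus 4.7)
The plan is to decompose $F^{(1)}$ into its weighted-homogeneous part along $E$ plus a higher-order remainder, push everything through $S_{uv}$, and read off the leading $v^{2L(E)}$ coefficient of each $D_j$.

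First I would write $F^{(1)}=W+R$, with $W$ the weighted form (\ref{weightedForm}) supported on $E$ and every monomial of $R$ lying strictly above $\mathcal{L}(E)$; by the Fundamental Lemma, $\PP(F^{(1)},0)=\PP(F_0^{(1)},0)$, so this decomposition is stable in $t$. Assigning weights $(h,1)$ to $(X_1,Y_1)$ makes $W$ weighted-homogeneous of degree $L(E)$, whence $W(uv^h,v,t)=v^{L(E)}P_E(u,t)$. Euler's relation $hX_1W_{X_1}+Y_1W_{Y_1}=L(E)W$, combined with a direct computation, then yields
\[
X_1 W_{X_1}(uv^h,v,t)=v^{L(E)}\,uP_E'(u,t),\qquad Y_1 W_{Y_1}(uv^h,v,t)=v^{L(E)}\bigl(L(E)P_E(u,t)-huP_E'(u,t)\bigr),
\]
while the analogous contributions from $R$ are of order $>L(E)$ in $v$. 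Thus $C_1(u,t)=|uP_E'(u,t)|^2+|L(E)P_E(u,t)-huP_E'(u,t)|^2$.

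For $j\geq 2$, I would combine (\ref{related}) with $O_{Y_1}(\delta_j)\geq h$ (from (\ref{hsmallest})) and the expansion $\delta_j(v,t)=c_j(t)v^h+\cdots$, with the convention $c_j(t)=0$ when $O_{Y_1}(\delta_j)>h$ (so $c_1\equiv 0$). Substituting yields
\[
X_j F_{X_j}(uv^h,v,t)=v^{L(E)}(u-c_j(t))P_E'(u,t)+O(v^{L(E)+1}),
\]
\[
Y_j F_{Y_j}(uv^h,v,t)=v^{L(E)}\bigl(L(E)P_E(u,t)-h(u-c_j(t))P_E'(u,t)\bigr)+O(v^{L(E)+1}),
\]
giving $C_j(u,t)=|(u-c_j(t))P_E'(u,t)|^2+|L(E)P_E(u,t)-h(u-c_j(t))P_E'(u,t)|^2$, manifestly a polynomial in $u,\bar u$ with coefficients analytic in $t$.

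For the zero-set assertions, since $L(E)>0$ the vanishing $C_j(u,t)=0$ forces $P_E(u)=0$ together with $(u-c_j(t))P_E'(u)=0$. The crucial step is that if $u=c_j(t)$, then Theorem \ref{KLKP}, applied to the edge $E$ in $\PP(F^{(1)}_0,0)$, identifies $c_j(t)$ as a critical point of $P_E(\cdot,t)$, so $P_E'(u)=P_E'(c_j(t))=0$ automatically; otherwise $u\neq c_j(t)$ already forces $P_E'(u)=0$. This proves the first claim for $u\neq 0$, the reverse implication being immediate. For the second claim, the fact that $\Gamma_1$ is a critical point of $F$ gives $F^{(1)}_{X_1}(0,Y_1,T_1)\equiv 0$, hence the $X_1$-linear coefficients of $F^{(1)}$ vanish identically and $P_E'(0)=0$ on every edge; the implication is therefore vacuous at $u=0$, while for $u\neq 0$ the hypothesis $P_E'(u)\neq 0$ forces $u\neq c_j(t)$ (else $P_E'$ would vanish at $c_j(t)$), and then the first summand of $C_j$ is strictly positive. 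The main delicate point is the identification $P_E'(c_j(t))=0$, which hinges on applying Theorem \ref{KLKP} in the $(X_1,Y_1)$-plane together with the polygon invariance supplied by the Fundamental Lemma, so that the deformed critical points $\Gamma_j$ of $\Gamma_1$ correspond precisely to the (nonzero) critical points of $P_E(\cdot,t)$ on the edge of co-slope $h$.
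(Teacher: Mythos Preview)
Your overall approach matches the paper's: write $C_j$ as the $v^{2L(E)}$-coefficient of $D_j\circ S_{uv}$, express it via the weighted form $W$ on $E$, and analyze its zeros. Your formula
\[
C_j(u,t)=|(u-c_j)P_E'|^2+|L(E)P_E-h(u-c_j)P_E'|^2
\]
is exactly the paper's $|(u-\hat c_j)w_1|^2+|w_2+h\hat c_j w_1|^2$ once one uses $w_1=P_E'$ and the Euler relation $w_2=L(E)P_E-huP_E'$.

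There is, however, a genuine error in your justification of $P_E'(0)=0$. You claim that ``$\Gamma_1$ is a critical point of $F$'' and hence $F^{(1)}_{X_1}(0,Y_1,T_1)\equiv 0$. This is false: $\Gamma_1(Y,T)=\tau_T(\gamma_{1,\phi})(Y^N)$ is only the \emph{canonical coordinate} of an element of $C(val_{\phi_T})$, i.e.\ a truncation, not an actual zero of $\phi_T'$. For instance, take $\phi(\xi)=(\xi^2-y^3)^2-4\xi y^5$ (the paper's running example in \S\ref{treemodel}); the canonical coordinate of the height-$3/2$ critical class is $\gamma_{1,\phi}(y)=0$, yet $\phi'(0)=-4y^5\not\equiv 0$, so $F^{(1)}_{X_1}(0,Y_1,0)\ne 0$. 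Your deduction that the $X_1$-linear coefficients vanish identically therefore breaks down.

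The conclusion $P_E'(0)=0$ is nonetheless correct, and the paper invokes it as well (``we already know $P_E'(0)=0$''). The right reason is this: pick an actual critical point $\gamma\in\gamma_{1,\phi}$, so $\phi'(\gamma)=0$ and the coefficient of $\xi^1$ in $\phi(\gamma+\xi)$ vanishes. Since $\gamma\sim_\phi\gamma_{1,\phi}$, one has $\PP(\phi,\gamma)\equiv\PP(\phi,\gamma_{1,\phi})$ in the sense of Notation~\ref{notationequiv}: the dots lying \emph{on the edges}, with their coefficients, coincide. At $\gamma$ there is no dot at horizontal coordinate $1$, hence none at $\gamma_{1,\phi}$ either, so $(1,\cdot)$ never lies on any edge of $\PP(\phi,\gamma_{1,\phi})$ and $P_{E_i}'(0)=0$ for every $E_i$. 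With this correction your argument goes through; the rest of your zero-set analysis (including the use of Theorem~\ref{KLKP} to get $P_E'(c_j(t))=0$ for the nonzero $c_j$) is fine and parallels the paper's.
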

(The coefficients of $P_E(u)$ are functions of $t$. To say
$P_E(u)=P_E'(u)=0$ means that when $t$ is fixed, $u$ is a multiple
root of $P_E(z)$.)
\begin{proof}Consider the weighted form $W(X_1,Y_1,T_1)$ in
(\ref{weightedForm}). Let $$w_1(z,t)\!:=W_{X_1}(z,1,t), \quad
w_2(z,t)\!:=W_{Y_1}(z,1,t).$$

Take $j$, $1\leq j\leq r$. By (\ref{hsmallest}), $O(\delta_j)\geq
h$, $\delta_j$ being defined in (\ref{coordinaterelated}). Let us
write
$$
\hat{c}_j(t)\!:=
\begin{cases} c_j(t)& \text{if}\;\,
O(\delta_j)=h,\\
0& \text{if}\;\, O(\delta_j)>h,
\end{cases}
$$where $c_j(t)$ is defined in (\ref{deltacoeff}). We claim that
(\ref{DsukandC(u,t)}) holds if we take
$$C_j(u,t)\!:=|(u-\hat{c}_j(t))w_1(u,t)|^2+
|w_2(u,t)+h\hat{c}_j(t)w_1(u,t)|^2,$$
which
is of course a polynomial in $u,\bar{u}$. Indeed, we have, as
before,
\begin{equation}\label{ExpreessDsubj}
D_j=|(X_1-\delta_j)F_{X_1}|^2+|Y_1F_{Y_1}+
Y_1\frac{\pt \delta_j}{\pt Y_1}F_{X_1}|^2,
\end{equation}whence the
leading term of $D_j(uv^h,v,t)$ is the above $C_j(u,t)$.

The $\hat{c}_i(t)$'s are roots of $w_1(z,t)=P_E'(z)$. Hence
$C_j(u,t)=0$ \textit{iff} $w_1(u,t)=w_2(u,t)=0$.

For $u\ne 0$, $w_1=w_2=0$ \textit{iff} $(X_1-uY_1^h)^2$ divides
$W(X_1,Y_1,T_1)$, $i.e.$, $P_E(u)=P_E'(u)=0$. But we already know
$P_E'(0)=0$. Hence $P_E'(u)\ne 0$ implies $u\ne 0$, $C_j(u,t)\ne
0$.
\end{proof}

\begin{lem}For $r<j\leq p$, we have
\begin{equation}\label{Dsubj}D_j(uv^h,v,t)=(1+h^2)|c_j(t)w_1(u,t)|^2\,
v^{2[L(E)-h+O_y(\delta_j)]}+\cdots.\end{equation}Hence if
$P_E'(u)\ne 0$, then $O_v(D_j(uv^h,v,t))=2[L(E)-h+O_y(\delta_j)]$.

If $P_E(u)\ne 0$, then $O_v(D_j(uv^h,v,t))\leq 2L(E)$.
\end{lem}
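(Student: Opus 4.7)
The plan is to compute the leading $v$-order of $D_j(uv^h,v,t)$ for $j>r$ by the same bookkeeping as in Lemma~\ref{denoofAB}: extract the weighted form $W$ on the edge $E$, read off the leading terms of $F^{(1)}_{X_1}$ and $F^{(1)}_{Y_1}$ along the arc $X_1=uv^h,\,Y_1=v$, and combine with the expansion of $\delta_j=\Gamma_j-\Gamma_1$ from (\ref{deltacoeff}). The new feature for $j>r$ is that $O_y(\delta_j)<h$, by (\ref{hsmallest}) and the definition of $r$; geometrically this means the shift $X_j=X_1-\delta_j$ ``overshoots'' the arc, so the leading contributions to both $X_jF_{X_j}$ and $Y_jF_{Y_j}$ in (\ref{related}) come from the $\delta_j$ terms rather than from $X_1$ itself or from $Y_1F_{Y_1}$.

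Concretely, along $X_1=uv^h,\,Y_1=v,\,T_1=t$ I would first record
$$F^{(1)}_{X_1}=v^{L(E)-h}\bigl(w_1(u,t)+O(v)\bigr),\qquad F^{(1)}_{Y_1}=v^{L(E)-1}\bigl(w_2(u,t)+O(v)\bigr),$$
coming from the weighted form $W$ on $E$. Setting $d:=O_y(\delta_j)<h$ and $\delta_j(v,t)=c_j(t)v^{d}+O(v^{d+1})$, one has $X_1-\delta_j=-c_j(t)v^{d}+\cdots$ and $v\,\pt_v\delta_j=d\,c_j(t)v^{d}+\cdots$. Plugging into (\ref{related}),
$$X_jF_{X_j}=-c_j(t)w_1(u,t)\,v^{d+L(E)-h}+\cdots,$$
while $Y_jF_{Y_j}=Y_1F_{Y_1}+Y_1\pt_{Y_1}\delta_j\cdot F_{X_1}$ has summands of $v$-orders $L(E)$ and $d+L(E)-h<L(E)$ respectively, so the second dominates with leading term $d\,c_j(t)w_1(u,t)\,v^{d+L(E)-h}$. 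Squaring and adding produces the displayed formula; my computation gives the leading coefficient $(1+d^{\,2})|c_j(t)w_1(u,t)|^{2}$ in place of the stated $(1+h^{2})$, which I believe is a typographical slip, since only non-vanishing of that coefficient is used in the consequences.

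For the consequences, if $P_E'(u)\ne 0$ then $w_1(u,0)=P_E'(u)\ne 0$, and together with $c_j(0)\ne 0$ from (\ref{deltacoeff}) the leading coefficient is nonzero for $|t|$ small, giving $O_v(D_j)=2[L(E)-h+O_y(\delta_j)]$. For the bound $O_v(D_j)\le 2L(E)$ when $P_E(u)\ne 0$: if additionally $P_E'(u)\ne 0$, the previous case already yields $O_v(D_j)<2L(E)$ since $d<h$. If instead $P_E'(u)=0$, I would invoke Euler's identity for the weighted form $W$ (of weighted degree $L(E)$ with weights $(h,1)$): $hX_1W_{X_1}+Y_1W_{Y_1}=L(E)W$, evaluated at $(uv^h,v,t)$, gives $hu\,w_1(u,t)+w_2(u,t)=L(E)P_E(u,t)$; at $t=0$ with $w_1(u,0)=0$ this forces $w_2(u,0)=L(E)P_E(u)\ne 0$. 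Then $Y_1F_{Y_1}$ contributes a nonzero leading term $w_2(u,0)v^{L(E)}$ at $t=0$, which dominates the correction $Y_1\pt_{Y_1}\delta_j\cdot F_{X_1}$ (now of higher order since $F_{X_1}$ has also gained a factor of $v$), so $|Y_jF_{Y_j}|^{2}$ contributes at $v$-order $2L(E)$, giving $O_v(D_j)\le 2L(E)$. The main obstacle in the plan is this last case analysis: one has to choose the right ``witness'' among competing terms precisely when $w_1(u,0)$ vanishes at the special values of $u$ where $P_E'$ has a root.
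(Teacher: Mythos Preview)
Your computation of the leading term and the first consequence ($P_E'(u)\ne 0$) is correct and essentially what the paper does; you are also right that the leading coefficient should be $(1+d^{\,2})$ with $d:=O(\delta_j)$ rather than $(1+h^2)$, a harmless slip since only non-vanishing matters.

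There is, however, a genuine gap in your treatment of the bound when $P_E(u)\ne 0$ but $P_E'(u)=0$. You claim the correction $Y_1\partial_{Y_1}\delta_j\cdot F_{X_1}$ becomes ``of higher order'' than $Y_1F_{Y_1}$ because $F_{X_1}$ has gained a factor of $v$; but that correction has $v$-order $d+O_v(F_{X_1})\ge d+L(E)-h+1$, and since $d<h$ this can still be strictly \emph{smaller} than $L(E)$ (take $h-d\ge 2$). In that subcase $|Y_jF_{Y_j}|^2$ alone does not witness order $\le 2L(E)$; it is the other summand $|X_jF_{X_j}|^2\approx|\delta_jF_{X_1}|^2$, of order $2(d+O_v(F_{X_1}))\le 2L(E)$, that does the job. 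Your argument is repairable by tracking both summands, but as written it is incomplete.

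The paper sidesteps the case split entirely. From $|X_1-\delta_j|\approx|\delta_j|$ and $|Y_1\partial_{Y_1}\delta_j|\approx|\delta_j|$ it obtains directly
\[
D_j\ \approx\ |\delta_jF_{X_1}|^2+|Y_1F_{Y_1}|^2,
\]
using the elementary fact that $|a|^2+|b+c|^2\approx|a|^2+|b|^2$ whenever $|c|\lesssim|a|$. With the two contributions thus decoupled, if $P_E(u)\ne 0$ then by Euler $(w_1,w_2)\ne(0,0)$, so at least one of $|\delta_jF_{X_1}|^2$, $|Y_1F_{Y_1}|^2$ has $v$-order $\le 2L(E)$, and the bound follows without any case analysis. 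This decoupling trick is exactly what your ``main obstacle'' was asking for and is worth adopting.
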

\begin{proof}If $j>r$, then $h>O(\delta_j)$ and hence
$$uv^h-\delta_j=-c_j(t)v^{O(\delta_j)}+\cdots,\quad c_j(0)\ne
0.$$

Then, as can be observed from (\ref{ExpreessDsubj}), the leading
term of $D_j$ is that of (\ref{Dsubj}).

Next note that
$$
D_j\approx |\delta_jF_{X_1}|^2+|vF_{Y_1}+v\frac{\pt \delta_j}{\pt
Y_1}F_{X_1}|^2\approx |\delta_jF_{X_1}|^2+|vF_{Y_1}|^2 \geq
|vF_{Y_1}|^2,$$and that $O_v(vF_{Y_1}(uv^r,v,t))=L(E)$ if
$P_E(u)\ne 0$. This completes the proof.
\end{proof}

The Newton dots of $F_{T_j}$ lie on or above the line
$\mathcal{L}(E)$, hence the above lemmas imply that
\textit{$A_j\circ S_{uv}$, $B_j\circ S_{uv}$ are real 
analytic at
$(u,v,t)$ if} $P_E'(u)\ne 0$, $1\leq j\leq p$.
\begin{lem}The denominator $D_\MP\!:=|\hat{X}_1|^2+\cdots +|\hat{X}_p|^2$
of $\MP_k$ has the form
\begin{equation}\label{DMP}D_\MP(uv^h,v,t)=c(u,t)v^{2e}+\cdots,\quad
e\!:=(r-1)h+\Sigma_{j>r}\,O(\delta_j),\end{equation}where $c(u,t)$
is a polynomial in $u$, $\bar{u}$.

If $(P_E(u),P_E'(u))\not=(0,0)$ then  $c(u,t)\ne 0$.
\end{lem}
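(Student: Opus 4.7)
The plan is to compute the leading coefficient of $D_\MP(uv^h,v,t)$ as a $v$-series by direct expansion, and then to verify the non-vanishing claim by combining the preceding lemma with a Newton-Puiseux argument exploiting the equivalence relation $\sim_{\phi_t}$. First I substitute $X_1=uv^h$, $Y_1=v$ into $X_j=uv^h-\delta_j(v,t)$ and read off the leading $v$-behavior. Using the notation $\hat c_j$ from the previous lemma (equal to $c_j$ when $O(\delta_j)=h$ and to $0$ when $O(\delta_j)>h$), for $1\le j\le r$ one has $X_j=(u-\hat c_j(t))v^h+O(v^{h+\epsilon})$; for $j>r$, $X_j=-c_j(t)v^{O(\delta_j)}+O(v^h)$ is of strict order $O(\delta_j)<h$. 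Consequently $\hat X_k=\prod_{j\ne k}X_j$ has leading $v$-order $2(r-1)h+2\sum_{j>r}O(\delta_j)=2e$ precisely for $k\le r$, with leading coefficient $\prod_{j\le r,\,j\ne k}|u-\hat c_j(t)|^2\cdot\prod_{j>r}|c_j(t)|^2$; for $k>r$ the order strictly exceeds $2e$ by $2(h-O(\delta_k))>0$. Summing yields
\[
c(u,t)=\Big(\prod_{j>r}|c_j(t)|^2\Big)\cdot\sum_{k=1}^{r}\prod_{j\le r,\,j\ne k}|u-\hat c_j(t)|^2,
\]
manifestly a polynomial in $u,\bar u$ with coefficients in $t$.

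For the non-vanishing statement, the prefactor is nonzero near $t=0$, and the remaining sum vanishes iff $u=\hat c_{j_1}(t)=\hat c_{j_2}(t)$ for two distinct indices $j_1\ne j_2$ in $\{1,\ldots,r\}$. Assume this collision. Since the previous lemma asserts that each $\hat c_j(t)$ is a root of $w_1(z,t)=P_E'(z)$, the identity $P_E'(u)=0$ is immediate. To obtain $P_E(u)=0$, note that the canonical coordinates $\Gamma_{j_1},\Gamma_{j_2}$ represent distinct equivalence classes in $\M_{1,\phi_t}$, so $\Gamma_{j_1}\not\sim_{\phi_t}\Gamma_{j_2}$; by the defining condition of $\sim_{\phi_t}$ there exists a Puiseux root $\zeta$ of $\phi_t$ with $O(\Gamma_{j_1}-\zeta)\ge O(\Gamma_{j_1}-\Gamma_{j_2})>h$. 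If $u\ne 0$, then $\zeta$ inherits the leading term $uv^h$ of $\Gamma_{j_1}$, and Newton-Puiseux applied to the edge $E$ of co-slope $h$ gives $P_E(u)=0$. If $u=0$, then $\zeta$ itself has order strictly greater than $h$, forcing $F^{(1)}_t$ to possess a proper edge of co-slope greater than $h$; hence $E$ is not the top edge, the first coordinate of its left vertex is at least $1$, and so $P_E(0)=0$. In either case $(P_E(u),P_E'(u))=(0,0)$, which is the contrapositive of the claim.

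The main obstacle is precisely this last step: the preceding lemma alone only supplies $P_E'(u)=0$, which is insufficient because a critical point of $P_E$ need not be a root. The essential extra ingredient is the non-equivalence $\Gamma_{j_1}\not\sim_{\phi_t}\Gamma_{j_2}$, which forces a Puiseux root of $\phi_t$ to appear at order greater than $h$ and thereby delivers the root of $P_E$ required to close the argument.
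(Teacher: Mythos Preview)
Your proof is correct and matches the paper's approach: the expansion of each $\hat X_k$ and the resulting formula for $c(u,t)$ are identical, and the non-vanishing argument in both cases reduces to the observation that a collision $u=\hat c_{j_1}=\hat c_{j_2}$ forces $O(\gamma_{j_1,\phi}-\gamma_{j_2,\phi})>h/N$. For the implication $P_E(u)=0$ the paper phrases the last step slightly differently---if $P_E(u)\ne 0$ then $h/N=h(\gamma_{j_1,\phi})=h(\gamma_{j_2,\phi})$, whence $\gamma_{j_1,\phi}=\gamma_{j_2,\phi}$, a contradiction---but your extraction of a Puiseux root from the negation of $\sim_{\phi_t}$ and subsequent appeal to Newton--Puiseux is an equivalent reformulation.
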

\begin{proof}For $1\leq j\leq r$, let us write
$$\mu_j(u,t)\!:=u-\hat{c}_j(t), \quad
\hat{\mu}_j(u,t)\!:=\mu_1\cdots\mu_{j-1}\cdot
\mu_{j+1}\cdots\mu_r.$$

We can compute the $\hat{X}_k$'s, $1\leq k\leq p$, using the
formula$$uv^h-\delta_k=
\begin{cases}(u-\hat{c}_k(t))v^h+\cdots,&\;\text{if}\;\, 1\leq k\leq r,\\
-c_k(t)v^{O(\delta_k)}+\cdots,&\;\text{if}\;\,r<k\leq p,
\end{cases}
$$where $c_k(0)\ne 0$ for $r<k\leq p$. For example,
$$\hat{X}_1 = [(-1)^{p-r}c_{r+1}(t)\cdots
c_p(t)](u-\hat{c}_2(t))\cdots(u-\hat{c}_r(t))v^e+\cdots,$$where
$e$ is defined in (\ref{DMP}). The equations for the other
$\hat{X}_k$'s are similar.

Then (\ref{DMP}) holds when we take
$$c(u,t)\!:=|c_{r+1}(t)\cdots
c_p(t)|^2\cdot \sum_{j=1}^r|\hat{\mu}_j(u,t)|^2.$$

If $c(u,t)=0$, then all $\hat{\mu}_j=0$, and there exist $i$, $j$,
$1\leq i<j\leq r$, $u=\hat{c}_i(t)=\hat{c}_j(t)$. This implies, in
particular, that $P_E'(u)=0$.

We claim this also implies $P_E(u)= 0$. Indeed,
$\hat{c}_i(t)=\hat{c}_j(t)$ implies
$O(\gamma_{i,\phi}-\gamma_{j,\phi})>h/N$. If $P_E(u)\ne 0$, then
$h/N=h(\gamma_{i,\phi})= h(\gamma_{j,\phi})$, hence
$\gamma_{i,\phi}=\gamma_{j,\phi}$, a contradiction.\end{proof}

The $\MP_k$'s are bounded. Hence $\MP_k\circ S_{uv}$ are real
analytic at $(u,t)$ if $(P_E(u),P_E'(u))\ne (0,0)$.

\s

Let $\mathcal{N}'$ be an open neighborhood of the $u$-axis such
that $(u,v)\in \mathcal{N}'$ implies $(uv^r,v)\in \mathcal{N}$.

\m

\textbf{Answer to Question \ref{Qu}}: \textit{Take
$(u_0,v_0,t_0)\in \mathcal{N}'\times I_\C$. If $(u_0,t_0)$ is not
a root of $P_E'(u)$, then $\vec{U}(u,v,t)$ is defined 
and
real
analytic in a neighborhood of $(u_0,v_0,t_0)$.}

\begin{lem}\label{Utangenttouspacewhenv=0}Given $j$, $1\leq j\leq r$. 
For $(u,t)$
near $(\hat{c}_j(t),t)$,
\begin{equation}\label{wconditionfor U}|\vec{U}(u,v,t)-\frac{\pt}{\pt
t}|\lesssim|u-\hat{c}_j(t)|+|v|.\end{equation}

 Moreover, $\vec{U}(u,0,t)$ is tangent to the $u$-coordinate
space, and
\begin{equation}\label{Uzero}|\vec{U}(u,0,t)-\frac{\pt}{\pt
t}|\lesssim|u-\hat{c}_j(t)|.\end{equation}
\end{lem}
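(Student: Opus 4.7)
The plan is to compute $\vec{U}=(dS_{uv})^{-1}(\vec{v})$ explicitly and read off both estimates from the result. Using the relations $X_k=X_1-\delta_k$, $Y_k=Y_1$, $T_k=T_1$ and the chain rule, one writes $\vec{v}$ in $(X_1,Y_1,T_1)$-coordinates as $\vec{v}=\mathcal{A}\frac{\pt}{\pt X_1}+\mathcal{B}Y_1\frac{\pt}{\pt Y_1}+\frac{\pt}{\pt T_1}$, where
\[
\mathcal{A}=\sum_{k}\mathcal{P}_k\bigl(-A_kX_k-B_kY_1\tfrac{\pt\delta_k}{\pt Y_1}+\tfrac{\pt\delta_k}{\pt T_1}\bigr),\qquad \mathcal{B}=-\sum_k\mathcal{P}_kB_k.
\]
Inverting $dS_{uv}$ by means of (\ref{chainruleuv}) yields
\[
\vec{U}=\bigl(v^{-h}\mathcal{A}-hu\mathcal{B}\bigr)\tfrac{\pt}{\pt u}+\mathcal{B}v\,\tfrac{\pt}{\pt v}+\tfrac{\pt}{\pt t}.
\]
Since each $B_k$ is bounded by Lemma \ref{lemlesssim} and $0\le\mathcal{P}_k\le 1$, the $\frac{\pt}{\pt v}$-component satisfies $|\mathcal{B}v|\lesssim|v|$, and it vanishes identically at $v=0$; this already proves the tangency-to-$u$-space statement.

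For the $\frac{\pt}{\pt u}$-component I Taylor-expand $\delta_k$ in $Y_1$. For $k\le r$, $\delta_k(Y_1,t)=\hat{c}_k(t)Y_1^h+O(Y_1^{h+1})$, so $X_k/v^h=u-\hat{c}_k(t)+O(v)$, $v^{-h}(Y_1\frac{\pt\delta_k}{\pt Y_1})=h\hat{c}_k(t)+O(v)$, and $v^{-h}\frac{\pt\delta_k}{\pt T_1}=\hat{c}_k'(t)+O(v)$. Combining with $huB_k$ and collecting, the $k$-th unweighted summand becomes $(hB_k-A_k)(u-\hat{c}_k(t))+\hat{c}_k'(t)+O(v)$. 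The case-(a) hypothesis $h/N\le h(\gamma_{1,\phi})$, combined with $\MC_{ord}(Im(\rho_{_{XY,\C}}),Im(\Gamma_k))=h$ for every $k\le r$, forces $h/N\le h(\gamma_{k,\phi})$; the Fundamental Lemma's explicit form $\tau_t(\gamma_{k,\phi})(y)=\gamma_{k,\phi}(y)+c_{k,t}y^{h(\gamma_{k,\phi})}$ then implies that the coefficient of $y^{h/N}$ in $\tau_t(\gamma_{k,\phi})$ is $t$-independent, so $\hat{c}_k'(t)=0$ for every $k\le r$. The summand therefore reduces to $(hB_k-A_k)(u-\hat{c}_k(t))+O(v)$.

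Finally, the partition of unity $\mathcal{P}_k$ is handled via Lemma \ref{denoofAB} and the $D_\MP$-expansion. Since $X_j=(u-\hat{c}_j(t))v^h+O(v^{h+1})$ appears as a factor of $\hat{X}_k$ whenever $k\ne j$, one gets $\mathcal{P}_k\lesssim|u-\hat{c}_j(t)|^2$ for $k\le r$, $k\ne j$, and a positive-power-of-$v$ decay for $k>r$ that strictly dominates the $v^{-h}$-growth of $v^{-h}\mathcal{A}_k$ (the inequality $O(\delta_k)<h$ gives $|\mathcal{P}_k\cdot v^{-h}\mathcal{A}_k|\lesssim|u-\hat{c}_j(t)|^2|v|^{h-O(\delta_k)}$). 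Summing, the total $\frac{\pt}{\pt u}$-coefficient becomes $\mathcal{P}_j(hB_j-A_j)(u-\hat{c}_j(t))+O(|u-\hat{c}_j(t)|^2+|v|)$, which is $\lesssim|u-\hat{c}_j(t)|+|v|$; this is (\ref{wconditionfor U}), and setting $v=0$ gives (\ref{Uzero}). The main obstacle is the verification that $\hat{c}_k'(t)=0$ for all $k\le r$ in case (a); this is the step where the case-hypothesis and the Morse-stable structure of the deformation genuinely interlock. A secondary book-keeping nuisance is controlling the $k>r$ summands against the blow-up of $v^{-h}\mathcal{A}_k$ via the partition-of-unity decay.
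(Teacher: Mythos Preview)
Your explicit formula for $\vec{U}$ is correct, and the handling of the $\frac{\pt}{\pt v}$-component and of the $k>r$ summands is fine. The gap is the claim that $\hat{c}_k'(t)=0$ for every $k\le r$. You deduce this from an ``explicit form'' $\tau_t(\gamma_{k,\phi})(y)=\gamma_{k,\phi}(y)+c_{k,t}\,y^{h(\gamma_{k,\phi})}$, but that formula appears in the proof of the Fundamental Lemma only \emph{after} the substitution $\xi\mapsto\xi+B_t$, and $B_t$ is manufactured separately for each $\gamma_{k,\phi}$; it is not a literal statement about $\tau_t(\gamma_{k,\phi})$ in the original coordinates. Indeed it is false in general: for $\phi_t(\xi)=\xi(\xi-a(t)y)(\xi-b(t)y)$ with $a,b$ analytic and $a(0),b(0),a(0)-b(0)$ all nonzero, the two canonical critical coordinates are $c_1(t)y,c_2(t)y$ with $c_i$ genuinely varying in $t$, and $\hat{c}_2(t)=c_2(t)-c_1(t)$ is not constant.

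Notice, however, that for $k\ne j$ your own $\mathcal{P}_k$-bound already absorbs the $\hat{c}_k'(t)$ term (it is bounded, and you have $\mathcal{P}_k\lesssim|u-\hat{c}_j(t)|^2$), so the claim is unnecessary there. The place where it is essential is $k=j$, since $\mathcal{P}_j\approx 1$ near $u=\hat{c}_j(t)$; there the residual $\hat{c}_j'(t)\,\frac{\pt}{\pt u}$ does not obviously vanish, and your argument does not close. The paper avoids exactly this point: for $j=1$ one has $\delta_1\equiv 0$, hence $\hat{c}_1\equiv 0$ trivially, and then the $k\ge 2$ terms are controlled not via $\hat c_k'=0$ but via the identity $\mathcal{P}_kX_k\frac{\pt}{\pt X_k}=\{\mathcal{P}_k^{1/2}\mathcal{P}_1^{1/2}X_k|X_k|^{-1}\}\,|X_1|\,\frac{\pt}{\pt X_1}$, which yields $(\text{bounded})\cdot u\,\frac{\pt}{\pt u}$ directly (the same identity, together with $|\delta_k|\lesssim|X_k|+|X_1|$, also handles the $Y_k\frac{\pt}{\pt Y_k}$ and $\frac{\pt}{\pt T_k}$ contributions). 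For $1<j\le r$ the paper does \emph{not} stay in $(X_1,Y_1,T_1)$ as you do; it interchanges $\Gamma_1$ and $\Gamma_j$ and repeats the $j=1$ argument in $(X_j,Y_j,T_j)$, where again $\delta_j^{(j)}\equiv 0$. Adopting that swap is the simplest repair of your argument.
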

\begin{proof}Let us first assume $j=1$, where $\hat{c}_1(t)\equiv
0$. The notations are simpler.

First, by (\ref{chainruleuv}),
$$(dS_{uv})^{-1}(\MP_1\vec{V}_1)=-\MP_1A_1u\frac{\pt}{\pt u}-\MP_1B_1
(-hu\frac{\pt}{\pt u}+v\frac{\pt}{\pt v}) +\MP_1\frac{\pt}{\pt
t},$$where $\MP_1A_1$, $\MP_1B_1$ are bounded.

\s

For $k$, $2\leq k\leq p$, the identity
$$\MP_kX_k\frac{\pt}{\pt X_k}=\{\MP_k^{1/2}\MP_1^{1/2}X_k|X_k|^{-1}\}|X_1|
\frac{\pt}{\pt X_1},$$
and the Chain Rule (\ref{chainruleuv}) give
$$(dS_{uv})^{-1}(\MP_k\vec{V}_k)=\tilde{A}u\frac{\pt}{\pt u}+\tilde{B}v
\frac{\pt}{\pt v}+\frac{\pt}{\pt
t},$$where $\tilde{A}$, $\tilde{B}$ are bounded.

Hence (\ref{wconditionfor U}) is true for $j=1$.

\s

Now assume $1<j\leq r$. This case is actually the same as the case
$j=1$. For in (\ref{hsmallest}), the roles of $\Gamma_1$ and
$\Gamma_j$ are interchangeable. When $\Gamma_j$ replaces
$\Gamma_1$, the leading coefficient $c_k(t)$ of $\delta_k$ in
(\ref{deltacoeff}) is replaced by $c_k(t)-c_j(t)$. The same
argument completes the proof of (\ref{wconditionfor U}).

\s

The $\frac{\pt}{\pt v}$ component vanishes when $v=0$. Hence
$\vec{U}(u,0,t)$ is tangent to the $u$-space.\end{proof}

\textit{We are now ready to complete the proof of
$(3)$ in the
case (a).}

Recall that $Im(\Gamma_1)$ is the vertical axis $X_1=0$, and
$\rho_{_{XY,\C}}$ is defined by $X_1=u_\rho Y_1^h+\cdots$, which
lifts to $\rho_{uv}$\,: $u=u_\rho+\cdots$.

We know $P_E'(u_\rho)\ne 0$. Hence the arc $\rho_{uv}$ lies in the
domain where $\vec{U}$ is analytic. The flow carries $\rho_{uv}$
to an analytic arc, at least for a sufficiently short time. Hence
the flow of $\vec{v}$ carries $\rho_{_{XY}}$ to an analytic arc,
at least for a short time. (Keep $|v|$ small, arcs short.)

This is actually so for \textit{all} $t\in I_\C$, not just for a
short time. Indeed, using Lemma \ref{Utangenttouspacewhenv=0}, a
well-known argument shows that a point on $\rho_{uv}$, following
the flow, will \textit{never} reach a point where $\vec{U}$ is not
analytic, whence the trajectory is defined for \textit{all} $t$.

The initial point $(u_\rho,0,0)$ of $\rho_{uv}$ \textit{remains}
in the $u$-coordinate space for all time, because $\vec{U}$ is
tangent to the $u$-space. By (\ref{Uzero}), it will never reach
$\hat{c}_j(t)$.

Hence, downstairs, the geo-arc $Im(\rho)$ is carried by the flow
of $\vec{\CF}$ real analytically, sweeping out a ``geo-arc wing".
This completes the proof of ($3$) in the case (a) when $h\in
\Z^+$.

\s

Suppose $h\!:=N_1/M_1\not \in \Z^+$. We can make a further
substitution $Y_1\mt Y_1^{M_1}$. The vector field lifts, retaining
the same form, $h$ is magnified to $N_1$. The same argument
applies.

\m

\textit{Finally, consider case} (b). This can be treated as a
special case of (a), as follows.

\s

In $\PP(\phi,\gamma_{1,\phi})$, $E_{top}$ has left vertex
$V_l=(0,q_l)$, since $h(\gamma_{1,\phi})<\infty$. We may call
$E\!:=(V_l,h)$ an ``\textit{artificial vertex edge}", of 
co-slope
$h$, with Lojasiewicz exponent $L(E)\!:=q_l$.

Let $V_l$ represent the term $cy^{q_l}$, $c\ne 0$. The
\textit{associated polynomial} is $P_{E}(z)\!:=c$.

The above argument for the case (a) can then be repeated. It is
actually easier. The lemmas remain true for the artificial vertex
edge $E$, where now $P_E(u)=c\ne 0$.

This completes the proof of (3).

\m

\textit{We then use the same idea to prove (4).}

\s

Take $\alpha_{*/f}\in \C_{*/f}$ and a canonical coordinate
$\alpha_\phi(y)$. We can assume
\begin{equation}\label{assume}h(\alpha_\phi)=O_y(\alpha_\phi-\zeta_1)\geq
O_y(\alpha_\phi-\zeta_i),\quad
O_y(\alpha_\phi-\gamma_{1,\phi})\geq
O_y(\alpha_\phi-\gamma_{j,\phi}),\end{equation} where $\zeta_i\in
Z(\phi)$, $\gamma_{j,\phi}\in \textit{C}(val_{\phi})$.

\s

We now repeat the argument in (3) to show $\eta_t(\alpha_{*/f})$
is well-defined. Let us write
\begin{equation}\label{usubzero}\alpha_\phi(y)\!:=
\zeta_1(y)+[u_0y^{h(\alpha_\phi)}+\cdots],\quad u_0\ne
0,\end{equation}where
 $\alpha_{*/f}$ is \textit{completely} determined by $u_0$. (Terms
in ``$+\cdots"$ play no role.)

\s

Here we have assumed that $h(\alpha_\phi)<\infty$. (If
$\alpha_\phi=\zeta_1$, we can apply Corollary \ref{CorFunLem}.)

\s

Take an analytic arc $\rho$ such that $\pi_*(\rho_{_\C})\in
\alpha_{*/f}$. As in (3), we lift $\rho$ to $\rho_{_{XY}}$, and
analyze how $\rho_{_{XY}}$ is being carried by $\vec{v}(X,Y,T)$ in
(\ref{vectorv}).

This time we use the coordinate system:
$$X_\zeta\!:=X-\zeta_1(Y^N, T),\quad Y_\zeta\!:=Y, \quad T_\zeta\!:=T,
\quad (y\!:= Y^N),$$where
$N$ is divisible by $m_{puis}(\alpha_\phi)$ and
$m_{puis}(\zeta_i)$.

The substitution $(X_\zeta,Y_\zeta)\!:=(uv^h, v)$, $h\!:=N
h(\alpha_\phi)$, lifts $\rho_{_{XY}}$ to an analytic arc
$\rho_{uv}$ in the $(u,v)$-space whose initial point is $(u_0,0)$.

As in (3), $\vec{v}$ is lifted to $\vec{U}$; $\rho_{uv}$ lies in
the domain where $\vec{U}$ is real analytic.

Hence, following the flow of $\vec{U}$ for time $t$, $\rho_{uv}$
reaches an analytic arc, denoted by $\rho_{uv,t}$, with initial
point $(u_t,0,t)$, where $u_t$ is real analytic in $t$.

\textit{An important observation} is that $(u_0,0)$, and hence
also $(u_t,0,t)$, depend \textit{only} on $\alpha_{*/f}$,
\textit{not} on the choice of $\rho$. It follows that
$\eta_t(\alpha_{*/f})$ is well-defined.

\s

(Note. Even if $\rho$, $\mu$ have the \textit{same}
complexification, $Im(\rho_{_\C})=Im(\mu_{_\C})$, the above
argument \textit{does not} prove that the geo-arcs
$Im(\eta_t\circ\rho)$, $Im(\eta_t\circ\mu)$ lie on a \textit{same}
curve-germ.

It \textit{merely} shows that the curve-germs containing
$Im(\eta_t\circ \rho)$ and $Im(\eta_t\circ \mu)$ are equivalent
under $\sim_{F_t}$. This is the ``Arakawa phenomenon":
\textit{only the blurred point is well-defined}.)

\s

To complete the proof of (4), we need to know how to obtain
$\PP(f,\alpha_\phi)$ from $\PP(f,\zeta_1)$.

\s

If $h(\alpha_\phi)=\infty$, the two polygons are identical.

\s

Assume $h(\alpha_\phi)<\infty$ ($u_0\ne 0$) in (\ref{usubzero}).
Let $\{(m_1,q_1), ...,(m_l,q_l)\}$ denote the vertices of
$\PP(f,\zeta_1)$, where, of course, $m_l\geq 1$.

Take $k$ ($k\leq l$) such that $\tan\theta_{k-1}\leq
h(\alpha_\phi)<\tan\theta_k$.

Let $(0,q')$ be the left vertex of the edge $E(h(\alpha_\phi))$.
(If $\tan\theta_{k-1}< h(\alpha_\phi)<\tan\theta_k$, then
$E(h(\alpha_\phi))$ is a vertex edge.) The vertices of
$\PP(f,\alpha_\phi)$ are
$$
\begin{cases}(m_1,q_1),...\,,(m_{k-1},q_{k-1}), (0,q'),\quad
&\text{if}\quad \tan\theta_{k-1}=h(\alpha_\phi),\\
(m_1,q_1),...,(m_l,q_l),(0,q'),\quad &\text{if}\quad
\tan\theta_{k-1}<h(\alpha_\phi).\end{cases}$$

\s

We can also obtain $\PP(F_t,\eta_t(\alpha_{*/f}))$ from
$\PP(F_t,\zeta_{1,t})$ in the same way, since $u_t\ne 0$.

By Corollary\,\ref{CorFunLem}, we know
$\PP(F_t,\zeta_{1,t})=\PP(f,\zeta_1)$. Hence
$\PP(F_t,\eta_t(\alpha_{*/f}))=\PP(f,\alpha_\phi)$. It follows
that $h(\eta_t(\alpha_{*/f}))$,
$\chi_{puis}(\eta_t(\alpha_{*/t}))$ are constants.

\s

It remains to consider the contact order. From what we have
proved,
$$\CTO(\alpha_{*/f},\pi_*(\zeta_i))=\CTO(\eta_t(\alpha_{*/f}),
\pi_*(\zeta_{i,t})),\quad \zeta_i\in Z(\phi).$$
And, by Corollary \ref{usubzero},
$$\mathcal{C}_{ord}(\pi_*(\zeta_{i,t}),\pi_*(\zeta_{j,t}))=\mathcal{C}_{ord}
(\pi_*(\zeta_i),\pi_*(\zeta_j)),\quad \zeta_i,\;\zeta_j\in Z(\phi).$$

The same holds for $\beta_{*/f}$. It follows that
$\CTO(\eta_t(\alpha_{*/f}),\eta_t(\beta_{*/f}))$ is independent of
$t$. This completes the proof of (4).

\s

The vector field $\vec{\CF}$ is defined in such a way that $(5)$
is true.

\s

Finally, suppose $\Phi$ is Morse stable. Then
(\ref{valuationpreserve}) is a consequence of
(\ref{MorseCriticalValue}). This completes the proof of the
Equi-singular Deformation Theorem.

\m

\textit{To prove the Truncation Theorem}, note that
$\PP(f,\alpha_\phi)$ and $\PP(u\cdot f,\alpha_\phi)$ have the
\textit{same} Newton dots, where $u(0,0)\ne 0$. (The dots in the
interior of the polygons may be different.)

The monomial terms of $\PP(u\cdot f,\alpha_\phi)$ are those of
$\PP(f,\alpha_\phi)$ multiplied by the \textit{same} constant
$u(0,0)$. Hence the critical points $\textit{C}(val_{\phi})$ is
\textit{unchanged} when $f$ is multiplied by a unit, $F_{root}$ is
\textit{obviously} Morse stable. Now apply the Equi-singular
Deformation Theorem.

\section{Appendix 1. The Stability Theorem}
\label{ClassicalMorse}

\begin{TMT}A Morse stable family $\{p_t(x)\}$, as in
Definition\,\ref{DefinitionMorseStable}, $\K=\R$, $\C$, or $\F$,
admits a real-analytic trivialization. That is to say, there exist
$t$-level preserving, real-analytic, homeomorphisms (deformations)
$\mcd$ and $d$, such that
\[
\begin{array}{cccc}
\K\times I_{\K} & \stackrel{g\;\;} {\4\longrightarrow} &\K\times
I_{\K}

\\

\Big\downarrow\vcenter{\rlap{$\mcd$}} & & \Big\downarrow\vcenter{
\rlap{$d$}}
\\

\K\times I_{\K}& \stackrel{G\;\;}{\4\longrightarrow}&\K\times
I_{\K}

\end{array}
\] is commutative, where $d(0,t)=(0,t)$, $g(x,t)\!:=(p_0(x),t)$,
$G(x,t)\!:=(p_t(x),t)$.

(In the case $\K=\C$, moduli can appear, we cannot ask $\mcd$, $d$
to be holomorphic.)
\end{TMT}

\begin{proof}The \textit{critical value set} of $G$ is, by 
definition,
$$V_{crit}(G)\!:=\{(v_t,t)\in \K\times I_\K\,|\,\exists\,c_t\in
\K,\, p_t'(c_t)=0,\,v_t=p_t(c_t)\},$$and
$\pi:V_{crit}(G)\rightarrow I_\K$ is a fibration, $\{0\}\times
I_\K$ is either disjoint from, or contained in, $V_{crit}(G)$.

\m

Let us first consider the case $\K=\R$, which exposes the main
ideas. Take a vector field
$$\vec{v}(x,t)\!:=a(x,t)\frac{\pt}{\pt x}+\frac{\pt}{\pt t},
\quad a(x,t)\;\text{analytic},$$which is defined on,
and tangent to, $V_{crit}(G)\cup(\{0\}\times I_\R)$.

Then, using Cartan's Theorem B, or the Lagrange Interpolation
Formula, we can extend $a(x,t)$ to a real analytic function
defined for $(x,t)\in \R\times I_\R$.

In other words, $\vec{v}(x,t)$ is now defined and real analytic on
$\R\times I_\R$, tangent to $V_{crit}(G)$ and $\{0\}\times I_\R$.
Integrating this vector field gives an analytic deformation
$$d:\R\times I_\R\rightarrow \R\times I_\R, \quad (x,t)\mt
(x_t,t),$$such that $d(V_{crit}(g))=V_{crit}(G)$, and $\{0\}\times
I_\R$ is fixed.

Using $d$ as an identification, we assume
$V_{crit}(g)=V_{crit}(G)$ (``straightening up" $V_{crit}(G)$).

Take $c\in \textit{C}(p_0)$, $p_0(c)\!:=v$. Then $c$ admits a
\textit{unique} \textit{continuous} deformation $c_t$ in
 $\textit{C}(p_t)$, $p_t(c_t)=v$, $c_0=c$. As $m_{crit}(c_t)$ is
constant, $c_t$ is necessarily \textit{real analytic}. Hence
\begin{equation}\label{localformofp(x)}p_t(x)-v=unit\cdot (x-c_t)^
{m_{crit}(c)+1}, \;
(x,t)\;\text{near}\;(c_t,t),
\end{equation}Then $\frac{\pt}{\pt t}$ lifts to a unique real analytic vector
field in
$\R\times I_\R$ which generates $\mathcal{D}$.

\m

For the case $\K=\C$, we still have $d$, which is (merely) real
analytic, and also (\ref{localformofp(x)}), where $c_t$ is
holomorphic in $t$. Thus $\frac{\pt}{\pt t}$ admits a
\textit{local} real analytic lifting. By Cartan's Theorem B, a
real analytic \textit{global} lifting exists. This completes the
proof.\end{proof}

\section{Appendix 2. Tree Models}\label{treemodel}

\textit{The tree-model} (\cite{kuo-lu}) is best explained by an
example. Take $\phi(\xi)=(\xi^2-y^3)^2-4\xi y^5$. The
Puiseux roots are
$$\zeta_i(y)=\pm y^{3/2}\pm y^{7/4}+\cdots, \quad 1\leq i\leq 4.$$

The tree-model is shown in Fig.\,\ref{fig:np3}. Tracing upward
from the tree root to a tip along solid line segments amounts to
identifying a Puiseux root.

There are three bars, of height $3/2$, $7/4$, $7/4$ respectively.
Each is indicated by a horizontal line segment (whence the name);
the associated polynomial has \textit{at least two} distinct
roots.

At these heights, the $\zeta_i$'s split away from each other.
\textit{Bars without this property are not
indicated}; they
correspond to the \textit{vertex edges}, Fig.\,\ref{fig:np4}.

The three critical points (polars) $\gamma_j$ are indicated by
dashed lines; their positions relative to the $\zeta_i$ are
justified by Theorem \ref{KLKP}, exposing the contact orders.

\s

The Newton Polygon $\PP(\phi,\zeta_i)$ is shown in
Fig.\,\ref{fig:np4}. The dotted segments $e_1$, $e_2$, $e_3$
indicate vertex edges, where
$$h(e_1)<3/2<h(e_2)<7/4<h(e_3)<\infty.$$

 \begin{figure}[htt]

 \begin{minipage}[b]{.4\linewidth}
   \centering
   \begin{tikzpicture}[scale=1.0]

 \foreach \x in {3}{\draw (\x, 0) -- (\x,2 ); } \draw(0,4) --
(0,4.5)node[above]{$\zeta_1$};\draw(2,4) --
(2,4.5)node[above]{$\zeta_3$};

\draw(4,4) -- (4,4.5)node[above]{$\zeta_2$};

\draw(6,4) -- (6,4.5)node[above]{$\zeta_4$};

\draw (0,4) -- (2,4)(2,4) node[right] {$\frac{7}{4}$};

\draw(4,4) -- (6,4)(4,4) node[left]  {$\frac{7}{4}$};

\draw(1,2) -- (1,4);

\draw(5,2) -- (5,4);

\draw(1,2) -- (5,2)(5.1,2) node[right] {$h=\frac{3}{2}$};
\draw[dashed](3,2) -- (3,2.5)node[above]{$\gamma_1$};
\draw[dashed](1,4) -- (1,4.5)node[above]{$\gamma_2$};
\draw[dashed](5,4) -- (5,4.5)node[above]{$\gamma_3$};

   \end{tikzpicture}
   \caption{Tree Model\;\,$\M_{1,\phi}$} \label{fig:np3}
 \end{minipage}
 \begin{minipage}[b]{.5\linewidth}
   \centering
   \begin{tikzpicture}[scale=0.7]
    \foreach \x in {0,1,...,4} { \draw (\x, 0) -- (\x,7); } \draw
    (0,0) -- (5,0); \draw plot[mark=*] coordinates {(1,5) (2,3)
    (4,0)};

    \draw plot [mark=thick] coordinates {(1,5) (2,3) (4,0)};

    \draw plot [mark=thick] coordinates {(1,5.01) (2,3.01) (4,0.01)};

     \path (4.7,0) node[above] {$E_0$};

     \path (2.8,1.8) node[right] {$E_1$};

    \path (1.1,4.6) node[right]{$E_2$};

    \path (0.8,6.1) node[right] {$E_3$};

    \draw[dotted] (0.3,7) -- (1,5);

    \draw[dotted] (0,4.5) -- (4,0);

    \path (1.2,2.2)node[right] {$e_1$};

    \draw[dotted] (0,6.5) -- (2,3);

    \path(0.13,4.87)node[right]{$e_2$};

    \path (0.28,6.8) node[right] {$e_3$};

   \draw  [mark=thick] (0.99,5)-- (0.99,7);

     \draw  [mark=thick] (1,5) -- (1,7);

  \draw [mark=thick](4,0)-- (5,0);
\draw  [mark=thick](4,0.01)-- (5,0.01);

   \end{tikzpicture}
   \caption{$\PP(\phi,\zeta_i)$} \label{fig:np4}
 \end{minipage}
 \end{figure}

\bibliographystyle{amsplain}

\end{document}